\DeclareSymbolFontAlphabet{\mathrsfs}{rsfs}
\newcommand*\bigcdot{\mathpalette\bigcdot@{.5}}
\newcommand*\bigcdot@[2]{\mathbin{\vcenter{\hbox{\scalebox{#2}{$\m@th#1\bullet$}}}}}
\theoremstyle{plain}
\newtheorem*{Example*}{\bfseries{\emph{Example}}}
\newtheorem*{Notation*}{\bfseries{\emph{Notation}}}
\newtheorem*{Theorem*}{\bfseries{\emph{Theorem}}}
\newtheorem*{Lemma*}{\bfseries{\emph{Lemma}}}
\newtheorem*{Proposition*}{\bfseries{\emph{Proposition}}}
\newtheorem*{Corollary*}{\bfseries{\emph{Corollary}}}
\newtheorem*{Remark*}{\bfseries{\emph{Remark}}}
\newtheorem*{Remarks*}{\bfseries{\emph{Remarks}}}
\newtheorem*{Def*}{\bfseries{\emph{Definition}}}
\newtheorem*{Conjecture*}{\bfseries{\emph{Conjecture}}}
\newtheorem*{sketch proof*}{Sketch proof}
\newtheorem{Theorem}{\bfseries{\emph{Theorem}}}[section]
\newtheorem{Remark}[Theorem]{\bfseries{\emph{Remark}}}
\newtheorem{Def}[Theorem]{\bfseries{\emph{Definition}}}
\newcommand{\cris}{\text{cris}}
\newcommand{\integral}{\text{int}}
\newcommand{\fractional}{\text{frac}}
  \title{Higher displays arising from filtered de Rham-Witt complexes}
  \date{February 15, 2019}
  \author{Oli Gregory\footnote{The first named author is supported by the ERC Consolidator Grant 681838 “K3CRYSTAL”.} \ and Andreas Langer}
\begin{document}
\maketitle
\begin{abstract}
 For a smooth projective scheme $X$ over a ring $R$ on which $p$ is nilpotent that meets some general assumptions we prove that the crystalline cohomology is equipped with the structure of a higher display which is a relative version of Fontaine's strongly divisible lattices. Frobenius-divisibility is induced by the Nygaard filtration on the relative de Rham-Witt complex. For a nilpotent PD-thickening $S/R$ we also consider the associated relative display and can describe it explicitly by a relative version of the Nygaard filtration on the de Rham-Witt complex associated to a lifting of $X$ over $S$. We prove that there is a crystal of relative displays if moreover the mod $p$ reduction of $X$ has a smooth and versal deformation space. 
\end{abstract}
\section{Introduction}
For a ring $R$ in which $p$ is nilpotent, we constructed in \cite{LZ07} an exact tensor category of displays which contains the displays associated to $p$-divisible groups \cite{Zin02} as a full subcategory. If $R=k$ is a perfect field, a display is a finitely generated free $W(k)$-module $M$ endowed with an injective Frobenius-linear map $F:M\rightarrow M$. In general, displays can be regarded as a relative version of Fontaine's strongly divisible lattices \cite{Fon83}. For a smooth projective scheme $X$ over $\text{Spec }R$ we developed a strategy in \cite{LZ07} to equip the crystalline cohomology $H_{\cris}^{n}(X/W(R))$ with the structure of a display. For a precise statement see below. The following assumptions were essential in the construction of such ``geometric'' displays:
\newline
\par
There exists a compatible system of smooth liftings $X_{n}/W_{n}(R)$ for $n\in\mathbb{N}$ of $X/R$ such that the following properties hold:
\newline
\par
(A1) The cohomology groups $H^{i}(X_{n},\Omega_{X_{n}/W_{n}(R)}^{j})$ are for each $n, i$ and $j$ locally free $W_{n}(R)$-modules of finite type.
\newline
\par 
(A2) For each $n$ the de Rham spectral sequence degenerates at $E_{1}$
\begin{equation*}
E_{1}^{i,j}=H^{j}(X_{n},\Omega_{X_{n}/W_{n}(R)}^{i})\Rightarrow\mathbb{H}^{i+j}(X_{n},\Omega_{X_{n}/W_{n}(R)}^{\bullet})
\end{equation*} 
(compare ($\ast$) and ($\ast\ast$) in (\cite{LZ07}, p.150) and assumptions 5.2, 5.3 in \cite{LZ07}). For example, these assumptions are satisfied by K3 surfaces, abelian schemes and smooth relative complete intersections (see \cite{LZ07}, Introduction).
\newline
\par
Let $I_{R}:=VW(R)$ and $W\Omega_{X/R}^{\bullet}$ be the relative de Rham-Witt complex as constructed in \cite{LZ04}. For $r\geq 0$ define the complex $\mathcal{N}^{r}W\Omega_{X/R}^{\bullet}$ as follows:
\begin{equation*}
(W\Omega_{X/R}^{0})_{[F]}\xrightarrow{d}(W\Omega_{X/R}^{1})_{[F]}\xrightarrow{d}\cdots\xrightarrow{d}(W\Omega_{X/R}^{r-1})_{[F]}\xrightarrow{dV}W\Omega_{X/R}^{r}\xrightarrow{d}\cdots
\end{equation*}
This is a complex of $W(R)$-modules where $(W\Omega_{X/R}^{i})_{[F]}$, for $i<r$, is considered as a $W(R)$-module via restriction of scalars along $W(R)\xrightarrow{F}W(R)$. It was conjectured in (\cite{LZ07}, Conj. 5.8) that the predisplay structure (Appendix, Def. \ref{predisplay}) on $P_{0}:=H_{\cris}^{n}(X/W(R))$, defined by the data $P_{r}:=\mathbb{H}^{n}(X,\mathcal{N}^{r}W\Omega_{X/R}^{\bullet})$ and maps $\hat{\alpha}_{r}:I_{R}\otimes P_{r}\rightarrow P_{r+1}$, $\hat{\iota}_{r}:P_{r+1}\rightarrow P_{r}$ and $\hat{F}_{r}:P_{r}\rightarrow P_{0}$ induced by the corresponding maps of complexes $\hat{\alpha}_{r}:I_{R}\otimes\mathcal{N}^{r}W\Omega_{X/R}^{\bullet}\rightarrow\mathcal{N}^{r+1}W\Omega_{X/R}^{\bullet}$, $\hat{\iota}_{r}:\mathcal{N}^{r+1}W\Omega_{X/R}^{\bullet}\rightarrow\mathcal{N}^{r}W\Omega_{X/R}^{\bullet}$ and $\hat{F}_{r}:\mathcal{N}^{r}W\Omega_{X/R}^{\bullet}\rightarrow W\Omega_{X/R}^{\bullet}$ given in (\cite{LZ07}, (5)) and the diagram below  and in the same order between the vertically written complexes
\begin{equation*}
\begin{tikzpicture}[descr/.style={fill=white,inner sep=1.5pt}]
        \matrix (m) [
            matrix of math nodes,
            row sep=2.5em,
            column sep=2em,
            text height=1.5ex, text depth=0.25ex
        ]
        {  I_{S}\otimes(W\Omega_{X/R}^{0})_{[F]} & (W\Omega_{X/R}^{0})_{[F]} & (W\Omega_{X/R}^{0})_{[F]} & W\Omega_{X/R}^{0}  \\
            \cdots &\cdots & \cdots & \cdots \\
            I_{S}\otimes(W\Omega_{X/R}^{r-1})_{[F]} & (W\Omega_{X/R}^{r-1})_{[F]} & (W\Omega_{X/R}^{r-1})_{[F]} & W\Omega_{X/R}^{r-1} \\
            I_{S}\otimes W\Omega_{X/R}^{r} & (W\Omega_{X/R}^{r})_{[F]} & W\Omega_{X/R}^{r} & W\Omega_{X/R}^{r} \\
            I_{S}\otimes W\Omega_{X/R}^{r+1} & W\Omega_{X/R}^{r+1} & W\Omega_{X/R}^{r+1} & W\Omega_{X/R}^{r+1} \\
            I_{S}\otimes W\Omega_{X/R}^{r+2} & W\Omega_{X/R}^{r+2} & W\Omega_{X/R}^{r+2} & W\Omega_{X/R}^{r+2} \\ 
            \cdots &\cdots & \cdots & \cdots \\ };

        \path[overlay,->, font=\scriptsize]
        (m-1-1) edge (m-1-2)
        (m-1-2) edge node [above] {$p$} (m-1-3)
        (m-1-3) edge node [above] {$\mathrm{id}$} (m-1-4)
        (m-3-1) edge (m-3-2)
        (m-3-2) edge  node [above] {$p$} (m-3-3)
        (m-3-3) edge  node [above] {$\mathrm{id}$} (m-3-4)
        (m-4-1) edge node [above] {$\tilde{F}$} (m-4-2)
        (m-4-2) edge  node [above] {$V$} (m-4-3)
        (m-4-3) edge  node [above] {$F$} (m-4-4)
        (m-5-1) edge  node [above] {$mult$} (m-5-2)
        (m-5-2) edge  node [above] {$\mathrm{id}$} (m-5-3)
        (m-5-3) edge  node [above] {$pF$} (m-5-4)
        (m-6-1) edge node [above] {$mult$} (m-6-2)
        (m-6-2) edge node [above] {$\mathrm{id}$} (m-6-3)
        (m-6-3) edge node [above] {$p^{2}F$} (m-6-4)
        (m-1-1) edge node [left] {$\mathrm{id}\otimes d$}(m-2-1)
        (m-1-2) edge node [left] {$d$}(m-2-2)
        (m-1-3) edge node [left] {$d$}(m-2-3)
        (m-1-4) edge node [left] {$d$}(m-2-4)
        (m-2-1) edge node [left] {$\mathrm{id}\otimes d$}(m-3-1)
        (m-2-2) edge node [left] {$d$}(m-3-2)
        (m-2-3) edge node [left] {$d$}(m-3-3)
        (m-2-4) edge node [left] {$d$}(m-3-4)
        (m-3-1) edge node [left] {$\mathrm{id}\otimes dV$}(m-4-1)
        (m-3-2) edge node [left] {$d$}(m-4-2)
        (m-3-3) edge node [left] {$dV$}(m-4-3)
        (m-3-4) edge node [left] {$d$}(m-4-4)
        (m-4-1) edge node [left] {$\mathrm{id}\otimes d$}(m-5-1)
        (m-4-2) edge node [left] {$dV$}(m-5-2)
        (m-4-3) edge node [left] {$d$}(m-5-3)
        (m-4-4) edge node [left] {$d$}(m-5-4)
        (m-5-1) edge node [left] {$\mathrm{id}\otimes d$}(m-6-1)
        (m-5-2) edge node [left] {$d$}(m-6-2)
        (m-5-3) edge node [left] {$d$}(m-6-3)
        (m-5-4) edge node [left] {$d$}(m-6-4)
        (m-6-1) edge node [left] {$\mathrm{id}\otimes d$}(m-7-1)
        (m-6-2) edge node [left] {$d$}(m-7-2)
        (m-6-3) edge node [left] {$d$}(m-7-3)
        (m-6-4) edge node [left] {$d$}(m-7-4);
        
\end{tikzpicture}
\end{equation*}
define a display structure on $P_{0}$ (Appendix, Def. \ref{display}). (Note that in the definition of $\hat{\alpha}_{r}$, $\tilde{F}(V\xi\otimes\omega):=\xi F\omega$).
\par
In this note we prove this conjecture under the assumption $r\leq n<p$, which is a standard hypothesis in integral $p$-adic Hodge theory, and prove the following:
\newpage
\begin{Theorem}\label{Display theorem}
\ 
\par
(a) Let $R$ be a local ring in which $p$ is nilpotent, and let $X$ be a smooth projective scheme over $\text{Spec }R$. Assume that there exists a compatible system of liftings $X_{n}/\text{Spec }W_{n}(R)$ satisfying (A1) and (A2). Then the data $(P_{r},\hat{\iota}_{r},\hat{\alpha}_{r},\hat{F}_{r})$ form a display structure $\mathcal{P}_{R}$ on $H_{\cris}^{n}(X/W(R))$ for $r\leq n<p$.
\par
(b) Assume that there exists in addition a frame $A\rightarrow R$ and a smooth projective $pA$-adic lifting $\mathcal{Y}/\text{Spf }A$ of $X$ such that the $\mathcal{Y}_{n}:=\mathcal{Y}\times_{\text{Spf }A}\text{Spf }A/p^{n}$ satisfy the analogous assumptions (A1) and (A2). Then the display structure on $H_{\cris}^{n}(X/W(R))$ obtained by base change from the window structure on $H_{\cris}^{n}(X/A)$ (compare \cite{LZ07} Theorem 5.5 and Corollary 5.6) is isomorphic to $\mathcal{P}_{R}$.
\end{Theorem}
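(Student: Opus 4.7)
The plan is to prove (a) by identifying $P_r=\mathbb{H}^n(X,\mathcal{N}^r W\Omega_{X/R}^{\bullet})$ with concrete filtered pieces of $H_{\cris}^n(X/W(R))$ cut out by a Hodge-type filtration coming from a lifting, and then directly verifying the axioms of a higher display. Working Zariski-locally gives access to affine smooth lifts $X_n/W_n(R)$, and the machinery of the relative de Rham-Witt complex from \cite{LZ04} provides a controlled comparison between $W_m\Omega_{X/R}^{\bullet}$ and $\Omega_{X_n/W_n(R)}^{\bullet}$.

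The first and most delicate step is to show that, for $r\leq n<p$, the iterated map $\mathcal{N}^{r}W\Omega_{X/R}^{\bullet}\to W\Omega_{X/R}^{\bullet}$ induced by $\hat{\iota}$ is, after truncation in degrees $\leq n$, a quasi-isomorphism onto the subcomplex of Frobenius-divisible elements. The hypothesis $r\leq n<p$ is used here to avoid divided-power denominators: the divided Frobenius $F_r=p^{-r}F$ lifts integrally in this range. Under this identification $P_r$ becomes a $W(R)$-submodule of $P_0$, so the $P_r$ fit into a descending filtration $P_0\supseteq P_1\supseteq\cdots$. Assumptions (A1) and (A2), combined with Hodge degeneration and the crystalline-to-de-Rham comparison, then imply that the associated gradeds are locally free of the expected ranks, yielding projectivity of $P_0$ and a candidate normal decomposition.

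Next I would build a normal decomposition explicitly: the Zariski-local splitting of the Hodge filtration on $X_n/W_n(R)$ available from (A1) is lifted to $W(R)$ and shown to be compatible with the maps $\hat{\iota}_r$, $\hat{\alpha}_r$, and $\hat{F}_r$. The final Frobenius axiom --- that the divided Frobenii $\hat{F}_r$ together generate $P_0$ after the Frobenius twist --- reduces on the associated graded to the classical statement that $p^{-r}F$ induces an isomorphism on the $r$-th Hodge quotient of crystalline cohomology, transported along the Nygaard identification above.

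For (b), Theorem 5.5 and Corollary 5.6 of \cite{LZ07} provide a window structure on $H_{\cris}^n(X/A)$ coming from the smooth lifting $\mathcal{Y}/\text{Spf}\,A$, whose base change along $A\to W(R)$ is a display. The comparison with $\mathcal{P}_R$ follows from the canonical map $\Omega_{\mathcal{Y}/A}^{\bullet}\to W\Omega_{X/R}^{\bullet}$ and base-change compatibility of the Nygaard filtration; both sides then present the same filtered $W(R)$-module with divided Frobenius, so the isomorphism of displays is formal. The principal obstacle throughout is the first step above: producing an integral --- rather than merely isogenous --- comparison between $\mathcal{N}^r W\Omega_{X/R}^{\bullet}$ and the Frobenius-divisible subcomplex of $W\Omega_{X/R}^{\bullet}$. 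Once that is in place, the remaining display axioms and the base-change compatibility should follow the blueprint of \cite{LZ07}.
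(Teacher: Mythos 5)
Your first step is where the argument breaks down, and it is exactly the point the theorem is about. You propose to identify $P_{r}=\mathbb{H}^{n}(X,\mathcal{N}^{r}W\Omega_{X/R}^{\bullet})$ with a $W(R)$-submodule of $P_{0}=H_{\cris}^{n}(X/W(R))$ consisting of ``Frobenius-divisible'' elements, with the divided Frobenius $F_{r}=p^{-r}F$ defined inside $P_{0}$. This only makes sense when $W(R)$ and the cohomology are $p$-torsion free (a perfect field, or the reduced case), which is precisely the case already settled in (\cite{LZ07}, Thm.~5.7); the content of Theorem~\ref{Display theorem}(a) is the general local ring with $p$ nilpotent, where $W(R)$ has abundant $p$-torsion, so $p^{-r}F$ is not a well-defined map and the condition ``$F(x)\in p^{r}P_{0}$'' does not cut out the Nygaard filtration. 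Already for $X=\text{Spec }R$ a point and $r=1$ the proposed identification fails in degree $0$: the image of $P_{1}\rightarrow P_{0}$ is $I_{R}=VW(R)$, whereas $\{x\in W(R):F(x)\in pW(R)\}$ also contains the Teichm\"{u}ller lifts $[a]$ of nilpotents with $a^{p}=0$, and division of $F$ by $p$ on this set is not unique because of $p$-torsion. The paper's proof takes a different route, and your sketch aims its ``integral comparison'' at the wrong target: the engine is Theorem~\ref{Theorem 0.2 of Lan16} (= \cite{Lan16}, Thm.~0.2, the former Conjecture 4.1 of \cite{LZ07}), a derived-category isomorphism for $r<p$ between $\mathcal{N}^{r}W_{n}\Omega_{X/R}^{\bullet}$ and the filtered de Rham complex $\mathcal{F}^{r}\Omega_{X_{n}/W_{n}(R)}^{\bullet}$ of the lifting, not a subcomplex of $W\Omega_{X/R}^{\bullet}$. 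It is only through this comparison that (A1) and (A2) enter: they give a degenerating spectral sequence for $\mathcal{F}^{r}$, hence the decomposition $P_{r}\cong I_{R}L_{0}\oplus\cdots\oplus I_{R}L_{r-1}\oplus L_{r}\oplus\cdots\oplus L_{n}$ with $L_{i}=H^{n-i}(X,\Omega_{X_{\bullet}/W_{\bullet}(R)}^{i})$, the divided Frobenius is transported through the identification, and the display condition becomes $\det(\oplus_{i}\Phi_{i})\in W(R)^{\times}$, which is checked after base change to the perfect residue field using \cite{Fon83} and (\cite{Kat87}, Prop.~2.5). In your outline the liftings $X_{n}$ never intervene in a usable way, and the ``normal decomposition by lifting a Zariski-local splitting of the Hodge filtration to $W(R)$'' has no mechanism behind it once the submodule picture is gone.

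Part (b) is likewise not formal. In the paper one does not simply compare $\Omega_{\mathcal{Y}/A}^{\bullet}$ with $W\Omega_{X/R}^{\bullet}$; one passes through the relative Witt frame attached to $W(A)\rightarrow R$ via the Lazard map $A\rightarrow W(A)$, identifies $(\mathcal{P}_{A/R})_{r}$ with hypercohomology of an explicit filtered complex using (\cite{BO78}, Thm.~7.2), uses the $p$-torsion freeness of $A$ and $W(A)$ (a modified \cite{LZ07}, Prop.~4.4) together with the relative comparison Theorem~\ref{Relative version of Theorem 0.2 of Lan16} to identify this with the relative Nygaard complex and to pin down the divided Frobenius uniquely, and only then reduces along the frame morphism to $\mathrsfs{W}_{R}$ to match the result with $\mathcal{P}_{R}$. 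The matching of divided Frobenii under base change is the whole point and is exactly what your ``both sides present the same filtered module, so the isomorphism is formal'' glosses over; without the integral comparison theorems it does not follow.
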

\begin{Remark*}
Theorem \ref{Display theorem}(a) was shown for reduced rings $R$ in (\cite{LZ07}, Theorem 5.7).
\end{Remark*}
In our second main result we derive a relative version of Theorem \ref{Display theorem} on relative displays using a modified version of the complexes $\mathcal{N}^{r}W\Omega_{X/R}^{\bullet}$. For this, let $S\rightarrow R$ be a homomorphism of rings in which $p$ is nilpotent and such that the kernel $\mathfrak{a}$ is equipped with nilpotent divided powers. In \cite{LZ19} and \cite{Gre17} we considered the Witt frames $\mathcal{W}_{S}$, $\mathcal{W}_{S/R}$, $\mathcal{W}_{R}$ (Appendix, Def. \ref{Witt frame}, \ref{relative Witt frame}) and the canonical homomorphisms of frames $\mathcal{W}_{S}\xrightarrow{u}\mathcal{W}_{S/R}$ and $\mathcal{W}_{S/R}\rightarrow\mathcal{W}_{R}$. Let $X/\text{Spec }R$ be as before and let $X_{S}$ be a smooth projective lifting of $X$ over $\text{Spec }S$, admitting liftings $(X_{S})_{n}$ over $\text{Spec }W_{n}(S)$ that satisfy the assumptions (A1) and (A2). Let $\tilde{\mathfrak{a}}$ be the logarithmic Teichm\"{u}ller ideal in $W(S)$, as defined in (\cite{Zin02}, 1.4 and Appendix, Def. \ref{relative Witt frame}). Then $\mathcal{J}:=\tilde{\mathfrak{a}}\oplus VW(S)$ is the kernel of the composite map $W(S)\rightarrow S\rightarrow R$ and is again equipped with a PD-structure (see \cite{Zin02}, 2.3). Define the complex $\mathcal{N}_{rel/R}^{r}W\Omega_{X_{S}/S}^{\bullet}$ as follows:
\resizebox{1.0\linewidth}{!}{
  \begin{minipage}{\linewidth}
\begin{align*}
(W\mathcal{O}_{X_{S}})_{[F]}\oplus\tilde{\mathfrak{a}}^{r}W\mathcal{O}_{X_{S}}\xrightarrow{d\oplus d}
& (W\Omega_{X_{S}/S}^{1})_{[F]}\oplus\tilde{\mathfrak{a}}^{r-1}W\Omega_{X_{S}/S}^{1}\xrightarrow{d\oplus d}\cdots \\
& \cdots\xrightarrow{d\oplus d}(W\Omega_{X_{S}/S}^{r-1})_{[F]}\oplus\tilde{\mathfrak{a}}W\Omega_{X_{S}/S}^{r-1}\xrightarrow{dV+d}W\Omega_{X_{S}/S}^{r}\xrightarrow{d}\cdots
\end{align*}
\end{minipage}}
The maps $\hat{\alpha}_{r}$, $\hat{\iota}_{r}$, $\hat{F}_{r}$ on $\mathcal{N}^{r}W\Omega_{X_{S}/S}^{\bullet}$ that define the predisplay structure on $H_{\cris}^{n}(X/W(S))=H_{\cris}^{n}(X_{S}/W(S))$ can easily be extended to maps on the complexes $\mathcal{N}_{rel/R}^{r}W\Omega_{X_{S}/S}^{\bullet}$, where multiplication by $p$ on $W\Omega_{X_{S}/S}^{j}$ is replaced by the map
\begin{equation*}
\pi:W\Omega_{X_{S}/S}^{j}\oplus\tilde{\mathfrak{a}}^{r-j}W\Omega_{X_{S}/S}^{j}\rightarrow W\Omega_{X_{S}/S}^{j}\oplus\tilde{\mathfrak{a}}^{r-j-1}W\Omega_{X_{S}/S}^{j}
\end{equation*}
which is multiplication by $p$ on $W\Omega_{X_{S}/S}^{j}$ and the inclusion on the other summand. The divided Frobenius $\hat{F}_{r}$ is defined on the subcomplex $\mathcal{N}^{r}W\Omega_{X_{S}/S}^{\bullet}$ as before and on $\tilde{\mathfrak{a}}^{r-j}W\Omega_{X_{S}/S}^{j}$ it is defined to be the zero map. In analogy to (\cite{LZ07}, (5)) we get induced maps of complexes
\begin{align*}
& \hat{\alpha}_{r}:\mathcal{J}\otimes\mathcal{N}_{rel/R}^{r}W\Omega_{X_{S}/S}^{\bullet}\rightarrow\mathcal{N}_{rel/R}^{r+1}W\Omega_{X_{S}/S}^{\bullet} \\
& \hat{\iota}_{r}:\mathcal{N}_{rel/R}^{r+1}W\Omega_{X_{S}/S}^{\bullet}\rightarrow\mathcal{N}_{rel/R}^{r}W\Omega_{X_{S}/S}^{\bullet} \\
& \hat{F}_{r}:\mathcal{N}_{rel/R}^{r}W\Omega_{X_{S}/S}^{\bullet}\rightarrow W\Omega_{X_{S}/S}^{\bullet}
\end{align*}
It is then easy to see that the above data form a predisplay $\mathcal{P}_{S/R}$ on the hypercohomology of these complexes over the relative Witt frame $\mathcal{W}_{S/R}$ (see (\cite{LZ19}, Def. 2 and Appendix, Def. \ref{predisplay}). Then one has:
\begin{Theorem}\label{Relative displays theorem}
\ 
\par
(a) Let $\mathcal{P}_{S}$ be the display over $S$ constructed in Theorem \ref{Display theorem}(a). Then the associated display $u_{\ast}\mathcal{P}_{S}$ under the homomorphism of frames $\mathcal{W}_{S}\xrightarrow{u}\mathcal{W}_{S/R}$ is isomorphic to $\mathcal{P}_{S/R}$. In particular, the predisplay $\mathcal{P}_{S/R}$ is a display over $\mathcal{W}_{S/R}$ in the sense of (\cite{LZ19}, Def. 3 and Appendix, Def. \ref{display}).
\par
(b) Assume that $R$ is artinian local with perfect residue field $k$. Assume that $X_{0}:=X\times_{R}k$ has a smooth versal formal deformation space and that the assumptions (32) and (33) of \cite{LZ19} (analogous to (A1) and (A2)) are satisfied. Then the display $\mathcal{P}_{S/R}$ only depends - up to isomorphism - on $X$, not on the lifting $X_{S}$. The collection $(\mathcal{P}_{S/R})_{S\rightarrow R}$ where $S\rightarrow R$ are PD-morphisms defines a crystal of relative displays.  
\end{Theorem}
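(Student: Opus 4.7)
For part (a), the plan is to verify termwise that the modified complex $\mathcal{N}_{rel/R}^{r}W\Omega_{X_{S}/S}^{\bullet}$ computes the base change $u_{\ast}\mathcal{P}_{S}$ at the level of hypercohomology. The frame homomorphism $u:\mathrsfs{W}_{S}\rightarrow\mathrsfs{W}_{S/R}$ only enlarges the underlying ideal from $I_{S}=VW(S)$ to $\mathcal{J}=\tilde{\mathfrak{a}}\oplus VW(S)$; consequently, the $r$-th filtration step of $u_{\ast}\mathcal{P}_{S}$ should differ from $P_{r}$ by additional classes coming from the PD-powers $\tilde{\mathfrak{a}}^{r-j}$ in degree $j$. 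I would show that the summand $(W\Omega_{X_{S}/S}^{j})_{[F]}$ in $\mathcal{N}_{rel/R}^{r}W\Omega_{X_{S}/S}^{\bullet}$ contributes the original part of the filtration, while $\tilde{\mathfrak{a}}^{r-j}W\Omega_{X_{S}/S}^{j}$ contributes the new part, and that the replacement of multiplication by $p$ by the map $\pi$, together with the vanishing of $\hat{F}_{r}$ on the new summand, matches exactly the rules $\hat{F}_{r}\hat{\iota}_{r}=\pi$ and $\hat{F}_{r}|_{\tilde{\mathfrak{a}}\cdot P_{0}}=0$ satisfied in a display over $\mathrsfs{W}_{S/R}$. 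Applying Theorem \ref{Display theorem}(a) over $S$ together with the fact that base change of displays along frame homomorphisms preserves the display property then yields both the isomorphism $u_{\ast}\mathcal{P}_{S}\cong\mathcal{P}_{S/R}$ and the statement that $\mathcal{P}_{S/R}$ is a display over $\mathrsfs{W}_{S/R}$.

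For independence from the lifting in (b), the plan is to exploit the versal deformation space of $X_{0}$. Given two smooth projective liftings $X_{S}$ and $X_{S}'$ of $X$ over $S$, the smoothness and versality of the formal deformation space of $X_{0}$ provide, after passing to a suitable versal PD-thickening $\tilde{S}\twoheadrightarrow S$, a common lifting $\tilde{X}$ over $\tilde{S}$ recovering both $X_{S}$ and $X_{S}'$ via two sections of $\tilde{S}\rightarrow S$. Applying part (a) to $\tilde{X}$ and base-changing along the frame map $\mathrsfs{W}_{\tilde{S}/R}\rightarrow\mathrsfs{W}_{S/R}$ from either section yields canonical isomorphisms $\mathcal{P}_{S/R}(X_{S})\cong\mathcal{P}_{S/R}(X_{S}')$; the underlying $W(S)$-module is already canonical by the crystalline nature of $H_{\cris}^{n}(X/W(S))$, so it remains only to check that the filtration and divided Frobenius are transported canonically, which follows from the naturality of the complexes $\mathcal{N}_{rel/R}^{r}$ in the lifting.

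For the crystal property, given PD-thickenings $T\rightarrow S\rightarrow R$ and compatible liftings of $X$ over $T$ and $S$, one checks that the base change of $\mathcal{P}_{T/R}$ along the induced frame homomorphism $\mathrsfs{W}_{T/R}\rightarrow\mathrsfs{W}_{S/R}$ is isomorphic to $\mathcal{P}_{S/R}$ and that these isomorphisms satisfy the cocycle condition for chains of PD-thickenings. Naturality of the complexes $\mathcal{N}_{rel/R}^{r}W\Omega_{?/?}^{\bullet}$ in the lifting reduces this to a direct check, and the independence statement above allows one to verify the cocycle for one convenient family of liftings. The main obstacle will be precisely this cocycle condition: one must show that the canonical isomorphisms produced by different choices of intermediate versal thickening in the independence proof agree on the nose, rather than merely up to higher homotopy that would render the crystal structure only a $2$-categorical one. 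I expect the argument to parallel the treatment of displays of $p$-divisible groups in \cite{LZ15}, now adapted to higher degree via the explicit complex-level description of the filtration, with the decomposition $\mathcal{J}=\tilde{\mathfrak{a}}\oplus VW(S)$ again playing the central role.
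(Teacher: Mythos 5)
Your outline for (a) gestures at the right statement but omits the one ingredient that actually makes it provable: a relative analogue of the Nygaard comparison. The complex $\mathcal{N}_{rel/R}^{r}W\Omega_{X_{S}/S}^{\bullet}$ is not a direct sum of $\mathcal{N}^{r}W\Omega_{X_{S}/S}^{\bullet}$ and an $\tilde{\mathfrak{a}}$-part (the differential $dV+d$ into degree $r$ mixes the summands), so a ``termwise'' verification does not compute the hypercohomology groups $(\mathcal{P}_{S/R})_{r}$, and without knowing these groups one cannot identify them with the filtration modules of $u_{\ast}\mathcal{P}_{S}$. The paper first proves that $\mathcal{N}_{rel/R}^{r}W_{n}\Omega_{X_{S}/S}^{\bullet}$ is quasi-isomorphic to the coherent complex $\mathcal{F}il_{rel/R}^{r}\Omega_{(X_{S})_{n}/W_{n}(S)}^{\bullet}$ (Theorem 2.2), by writing both as mapping fibres over the non-relative complexes and comparing the fibres $\tilde{\mathfrak{a}}^{(r)}\Omega_{D_{n}}^{\bullet}$ and $\tilde{\mathfrak{a}}^{(r)}W_{n}\Omega^{\bullet}$; the key local computation is that the fractional part of the de Rham--Witt complex is annihilated by $\tilde{\mathfrak{a}}$ because Frobenius kills the logarithmic Teichm\"{u}ller ideal. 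Only after this comparison do (A1), (A2) give the degenerating spectral sequence, the decomposition $(\mathcal{P}_{S/R})_{r}\cong\mathcal{J}_{r}L_{0}\oplus\cdots\oplus\mathcal{J}L_{r-1}\oplus L_{r}\oplus\cdots\oplus L_{n}$, and the identification $u_{\ast}\mathcal{P}_{S}\cong\mathcal{P}_{S/R}$ via the compatible inclusions $\mathcal{N}^{r}\hookrightarrow\mathcal{N}^{r}_{rel/R}$. None of this is supplied by your sketch.

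For (b) your mechanism is also not the one that works, and as stated it is circular. The filtration genuinely depends on the lifting $X_{S}$, so ``the filtration and divided Frobenius are transported canonically, which follows from the naturality of the complexes $\mathcal{N}_{rel/R}^{r}$ in the lifting'' assumes exactly the independence to be proved; moreover your auxiliary object (a thickening $\tilde{S}\twoheadrightarrow S$ carrying a single lifting $\tilde{X}$ recovering both $X_{S}$ and $X_{S}'$ via two sections) has the arrows pointing the wrong way and is not known to exist. The paper instead works over the versal base: both liftings are classified by maps $A\rightrightarrows S$ from $A=W(k)\llbracket t_{1},\ldots,t_{h}\rrbracket$, giving frame morphisms $x,y:\mathcal{A}_{\text{triv}}\rightrightarrows\mathrsfs{W}_{S/R}$ with $\mathcal{P}_{S/R}(X_{S})=x_{\bullet}\mathcal{P}_{A}^{\text{triv}}$, $\mathcal{P}_{S/R}(X_{S}')=y_{\bullet}\mathcal{P}_{A}^{\text{triv}}$. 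The decisive step is to form $B=A\hat{\otimes}_{W}A$, $J=\ker(B\rightarrow A)$, and the $p$-adically completed PD-envelope $\widehat{D_{B}(J)}$, which is a PD-polynomial algebra over $B$ and in particular $p$-torsion free; this yields a frame $\mathcal{A}=(\widehat{D_{B}(J)}\rightarrow A)$ through which both $x$ and $y$ factor as $\bar{z}\circ\bar{x}$ and $\bar{z}\circ\bar{y}$. Because $\widehat{D_{B}(J)}$ is $p$-torsion free the $\mathcal{A}$-display of the versal family is the base change of $\mathcal{P}_{A}^{\text{triv}}$ along both $\bar{x}$ and $\bar{y}$, whence $x_{\bullet}\mathcal{P}_{A}^{\text{triv}}=y_{\bullet}\mathcal{P}_{A}^{\text{triv}}$, and the crystal structure comes for free since every $\mathcal{P}_{S/R}$ is pulled back from the single display over the versal frame. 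Your worry about the cocycle condition is thereby dissolved, but your proposal does not contain the construction that dissolves it.
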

\begin{Remark*}
\
\par
\emph{Relative displays were first considered by Zink \cite{Zin02} and Lau \cite{Lau14} in their classification of $p$-divisible groups. Zink associated to a formal $p$-divisible group $\mathcal{G}$ and a nilpotent PD-thickening $S\rightarrow R$ a relative display $\mathrsfs{D}_{S/R}(\mathcal{G})$ (which he calls a triple in \cite{Zin02}) which is - up to isomorphism - the unique relative display lifting the display associated to $\mathcal{G}$ over $R$ to $S$. Using Dieudonn\'{e} displays, i.e. displays defined over the small Witt ring \cite{Zin01}, Zink extended the classification to all $p$-divisible groups over an artinian local ring with perfect residue field, see also \cite{Lau14} and \cite{Mes07}. Over slightly more general rings, called admissible rings, Lau \cite{Lau14} constructed a unique functor from $p$-divisible groups to crystals of relative displays. The relative display is obtained by base change from a window structure associated to the universal $p$-divisible group over the deformation ring. The proof of Theorem \ref{Relative displays theorem}(b) was inspired by this construction.
}
\end{Remark*}

We assume that the reader is familiar with the basic definitions and properties of the relative de Rham-Witt complex \cite{LZ04} including the comparison to crystalline cohomology and the explicit description of the de Rham-Witt complex of a polynomial algebra. In an appendix we recall the basic definitions of the theory of higher displays including frames, windows and relative displays.
\section{Proof of the theorems}
Theorem \ref{Display theorem}(a) is a consequence of the following result which was conjectured in (\cite{LZ07}, Conj. 4.1) and was recently proved by the second named author in (\cite{Lan18}, Thm. 0.2) under the assumption $r<p$:
\begin{Theorem}\label{Theorem 0.2 of Lan18}
Let $R$ be a ring on which $p$ is nilpotent and let $X/\text{Spec }R$ be smooth projective, and $X_{n}/\text{Spec }W_{n}(R)$ a compatible system of smooth liftings. Let $\mathcal{F}^{r}\Omega_{X_{n}/W_{n}(R)}^{\bullet}$ be the following complex:
\begin{equation*}
\resizebox{1.0\hsize}{!}{$
I_{R}\otimes\mathcal{O}_{X_{n}}\xrightarrow{pd}I_{R}\otimes\Omega_{X_{n}/W_{n}(R)}^{1}\xrightarrow{pd}\cdots\xrightarrow{pd}I_{R}\otimes\Omega_{X_{n}/W_{n}(R)}^{r-1}\xrightarrow{d}\Omega_{X_{n}/W_{n}(R)}^{r}\xrightarrow{d}\cdots
$}
\end{equation*}
where $I_{R}:=VW_{n-1}(R)$, and let $\mathcal{N}^{r}W_{n}\Omega_{X/R}^{\bullet}$ be the Nygaard complex, given as
\begin{equation*}
\resizebox{1.0\hsize}{!}{$
(W_{n-1}\Omega_{X/R}^{0})_{[F]}\xrightarrow{d}(W_{n-1}\Omega_{X/R}^{1})_{[F]}\xrightarrow{d}\cdots\xrightarrow{d}(W_{n-1}\Omega_{X/R}^{r-1})_{[F]}\xrightarrow{dV}W_{n}\Omega_{X/R}^{r}\xrightarrow{d}\cdots
$}
\end{equation*}
Then $\mathcal{F}^{r}\Omega_{X_{n}/W_{n}(R)}^{\bullet}$ and $\mathcal{N}^{r}W_{n}\Omega_{X/R}^{\bullet}$ are isomorphic in the derived category of $W_{n}(R)$-modules for $r<p$.
\end{Theorem}

\subsection{Proof of Theorem \ref{Display theorem}(a)}
Under the assumptions (A1) and (A2) it follows from \cite{LZ07} Propositions 3.2 and the projection formula Proposition 3.1 that one has a degenerating spectral sequence $E_{1}^{i,j}\Rightarrow\mathbb{H}^{i+j}(X_{n},\mathcal{F}^{r}\Omega_{X_{n}/W_{n}(R)}^{\bullet})$ where
\[   E_{1}^{i,j}=\left\{
\begin{array}{ll}
      I_{R}\otimes H^{j}(X_{n},\Omega_{X_{n}/W_{n}(R)}^{i}) & \text{for }i<r \\
      H^{j}(X_{n},\Omega_{X_{n}/W_{n}(R)}^{i}) & \text{for }i\geq r
\end{array} 
\right. \]
It follows from the proof of Theorem 2.1 (\cite{Lan18}, Theorem 0.2) that the isomorphisms  $\mathcal{F}^{r}\Omega_{X_{n}/W_{n}(R)}^{\bullet}\cong\mathcal{N}^{r}W_{n}\Omega_{X/R}^{\bullet}$ are compatible for varying $n$ and yield an isomorphism $\mathcal{F}^{r}\Omega_{X_{\bullet}/W_{\bullet}(R)}^{\bullet}\cong\mathcal{N}^{r}W_{\bullet}\Omega_{X/R}^{\bullet}$ of procomplexes in $D_{\text{pro},\text{Zar}}(X)$. This induces an isomorphism resp. a decomposition
\begin{equation*}
P_{r}=\mathbb{H}^{n}(X,\mathcal{F}^{r}\Omega_{X_{\bullet}/W_{\bullet}(R)}^{\bullet})\cong I_{R}L_{0}\oplus I_{R}L_{1}\oplus\cdots\oplus I_{R}L_{r-1}\oplus L_{r}\oplus\cdots\oplus L_{n}
\end{equation*}
where $L_{i}:=H^{n-i}(X,\Omega_{X_{\bullet}/W_{\bullet}(R)}^{i})$. Since the divided Frobenius $\hat{F}_{r}$ is defined on $\mathbb{H}^{n}(X,\mathcal{F}^{r}\Omega_{X_{\bullet}/W_{\bullet}(R)}^{\bullet})$ via Theorem \ref{Theorem 0.2 of Lan18}, we can define $\Phi_{r}:L_{r}\rightarrow P_{0}$ by $\Phi_{r}:=\hat{F}_{r}|_{L_{r}}$.
\par
To show that $(P_{r},\hat{F}_{r},\hat{\iota}_{r},\hat{\alpha}_{r})$ defines a display on $H_{\cris}^{n}(X/W(R))$ is equivalent to the condition that
\begin{equation*}
\bigoplus_{i=0}^{n}\Phi_{i}:\bigoplus_{i=0}^{n}L_{i}\rightarrow\bigoplus_{i=0}^{n}L_{i}
\end{equation*} 
is a $\sigma$-linear isomorphism, or equivalently that $\det(\oplus_{i=0}^{n}\Phi_{i})\in W(R)^{\times}$. This is reduced by base change to the case that $R=k$ is a perfect field in the same way as in the proof of (\cite{LZ07} Thm. 5.5), and then follows from (\cite{Fon83}, p.91) and (\cite{Kat87}, Prop. 2.5).
\subsection{Proof of Theorem \ref{Relative displays theorem}(a)}
We are going to explicitly construct displays over the relative Witt frames. For this, let $S\rightarrow R$ be a homomorphism of rings in which $p$ is nilpotent and such that the kernel $\mathfrak{a}$ is equipped with divided powers. Then the kernel of $W(S)\rightarrow R$ is $\tilde{\mathfrak{a}}\oplus I_{S}$ where $I_{S}:=VW(S)$ and $\tilde{\mathfrak{a}}$ is the logarithmic Teichm\"{u}ller ideal. Then we consider the relative Witt frame $\mathcal{W}_{S/R}$ as defined in \cite{LZ19}.
\par 
We first prove a relative version of Theorem \ref{Theorem 0.2 of Lan18}:
\begin{Theorem}\label{Relative version of Theorem 0.2 of Lan18}
Let $X_{S}/\text{Spec }S$ be a smooth projective lifting of $X/\text{Spec }R$ and assume that $X_{S}$ admits a compatible system of smooth liftings $(X_{S})_{n}$ over $\text{Spec }W_{n}(S)$. Set $I_{S}:=VW_{n-1}(S)$. Let $\mathcal{F}il_{rel/R}^{r}\Omega_{(X_{S})_{n}/W_{n}(S)}^{\bullet}$ be the following complex:
\begin{center}
\resizebox{1.0\linewidth}{!}{
  \begin{minipage}{\linewidth}
\begin{align*}
I_{S}\mathcal{O}_{(X_{S})_{n}}\oplus
& \tilde{\mathfrak{a}}^{r}\mathcal{O}_{(X_{S})_{n}}\xrightarrow{pd\oplus d}I_{S}\Omega_{(X_{S})_{n}/W_{n}(S)}^{1}\oplus\tilde{\mathfrak{a}}^{r-1}\Omega_{(X_{S})_{n}/W_{n}(S)}^{1}\xrightarrow{pd\oplus d}\cdots \\
& \cdots\xrightarrow{pd\oplus d}I_{S}\Omega_{(X_{S})_{n}/W_{n}(S)}^{r-1}\oplus\tilde{\mathfrak{a}}\Omega_{(X_{S})_{n}/W_{n}(S)}^{r-1}\xrightarrow{d+d}\Omega_{(X_{S})_{n}/W_{n}(S)}^{r}\xrightarrow{d}\cdots
\end{align*}
\end{minipage}}
\end{center}
and let $\mathcal{N}^{r}_{rel/R}W_{n}\Omega_{X_{S}/S}^{\bullet}$ be the following complex:
\begin{center}
\resizebox{1.0\linewidth}{!}{
  \begin{minipage}{\linewidth}
\begin{align*}
(W_{n-1}\mathcal{O}_{X_{S}})_{[F]}\oplus
& 
\tilde{\mathfrak{a}}^{r}W_{n}\mathcal{O}_{X_{S}}\xrightarrow{d\oplus d}(W_{n-1}\Omega_{X_{S}/S}^{1})_{[F]}\oplus\tilde{\mathfrak{a}}^{r-1}W_{n}\Omega_{X_{S}/S}^{1}\xrightarrow{d\oplus d}\cdots \\
& \cdots\xrightarrow{d\oplus d}(W_{n-1}\Omega_{X_{S}/S}^{r-1})_{[F]}\oplus\tilde{\mathfrak{a}}W_{n}\Omega_{X_{S}/S}^{r-1}\xrightarrow{dV+d}W_{n}\Omega_{X_{S}/S}^{r}\xrightarrow{d}\cdots
\end{align*}
\end{minipage}}
\end{center}
Then for $r<p$ the complexes $\mathcal{F}il_{rel/R}^{r}\Omega_{(X_{S})_{n}/W_{n}(S)}^{\bullet}$ and $\mathcal{N}^{r}_{rel/R}W_{n}\Omega_{X_{S}/S}^{\bullet}$ are quasi-isomorphic.
\end{Theorem}
\begin{proof}
First notice that we may write $\mathcal{F}il_{rel/R}^{r}\Omega_{(X_{S})_{n}/W_{n}(S)}^{\bullet}$ and $\mathcal{N}^{r}_{rel/R}W_{n}\Omega_{X_{S}/S}^{\bullet}$ as the mapping cones of certain morphisms of complexes:
\begin{equation*}
\mathcal{F}il_{rel/R}^{r}\Omega_{(X_{S})_{n}/W_{n}(S)}^{\bullet}=\text{Cone}(\tilde{\mathfrak{a}}^{(r)}\Omega_{(X_{S})_{n}/W_{n}(S)}^{<r}[-1]\xrightarrow{f}\mathcal{F}^{r}\Omega_{(X_{S})_{n}/W_{n}(S)}^{\bullet})
\end{equation*}
and
\begin{equation*}
 \mathcal{N}^{r}_{rel/R}W_{n}\Omega_{X_{S}/S}^{\bullet}=\text{Cone}(\tilde{\mathfrak{a}}^{(r)}W_{n}\Omega_{X_{S}/S}^{<r}[-1]\xrightarrow{g}\mathcal{N}^{r}W_{n}\Omega_{X_{S}/S}^{\bullet})
\end{equation*}
where $\tilde{\mathfrak{a}}^{(r)}\Omega_{(X_{S})_{n}/W_{n}(S)}^{<r}$ and $\tilde{\mathfrak{a}}^{(r)}W_{n}\Omega_{X_{S}/S}^{<r}$ are the truncated complexes of the complexes $\tilde{\mathfrak{a}}^{(r)}\Omega_{(X_{S})_{n}/W_{n}(S)}^{\bullet}$
\begin{equation*}
\resizebox{1.0\hsize}{!}{$
\tilde{\mathfrak{a}}^{r}\mathcal{O}_{(X_{S})_{n}}\xrightarrow{-d}\tilde{\mathfrak{a}}^{r-1}\Omega_{(X_{S})_{n}/W_{n}(S)}^{1}\xrightarrow{-d}\cdots\xrightarrow{-d}\tilde{\mathfrak{a}}\Omega_{(X_{S})_{n}/W_{n}(S)}^{r-1}\xrightarrow{-d}\tilde{\mathfrak{a}}\Omega_{(X_{S})_{n}/W_{n}(S)}^{r}\xrightarrow{-d}\cdots
$}
\end{equation*}
and $\tilde{\mathfrak{a}}^{(r)}W_{n}\Omega_{X_{S}/S}^{\bullet}$
\begin{equation*}
\resizebox{1.0\hsize}{!}{$
\tilde{\mathfrak{a}}^{r}W_{n}\mathcal{O}_{X_{S}}\xrightarrow{-d}\tilde{\mathfrak{a}}^{r-1}W_{n}\Omega_{X_{S}/S}^{1}\xrightarrow{-d}\cdots\xrightarrow{-d}\tilde{\mathfrak{a}}W_{n}\Omega_{X_{S}/S}^{r-1}\xrightarrow{-d}\tilde{\mathfrak{a}}W_{n}\Omega_{X_{S}/S}^{r}\xrightarrow{-d}\cdots
$}
\end{equation*}
respectively. The morphisms $f$ and $g$ are respectively given by
\begin{equation*}
\tag{2.3}
\label{equation_2_3}
\begin{tikzpicture}[descr/.style={fill=white,inner sep=1.5pt}]
        \matrix (m) [
            matrix of math nodes,
            row sep=2.5em,
            column sep=2em,
            text height=1.5ex, text depth=0.25ex
        ]
        {  0 & \tilde{\mathfrak{a}}^{r}\mathcal{O} & \cdots & \tilde{\mathfrak{a}}^{2}\Omega^{r-2} &  \tilde{\mathfrak{a}}\Omega^{r-1} & 0 & \cdots \\
            I_{S}\mathcal{O} & I_{S}\Omega^{1} & \cdots & I_{S}\Omega^{r-1} & \Omega^{r} & \Omega^{r+1} & \cdots\\
        };

        \path[overlay,->, font=\scriptsize]
        (m-1-1) edge (m-2-1)
        (m-1-2) edge node [left]{$0$} (m-2-2)
        (m-1-4) edge node [left]{$0$} (m-2-4)
        (m-1-5) edge node [left]{$d$} (m-2-5)
        (m-1-6) edge (m-2-6);
        
        \path[overlay,->, font=\scriptsize]
        (m-1-1) edge (m-1-2)
        (m-1-2) edge node [above]{$-d$} (m-1-3)
        (m-1-3) edge node [above]{$-d$} (m-1-4)
        (m-1-4) edge node [above]{$-d$} (m-1-5)
        (m-1-5) edge (m-1-6)
        (m-1-6) edge (m-1-7)
        (m-2-1) edge node [above]{$pd$} (m-2-2)
        (m-2-2) edge node [above]{$pd$} (m-2-3)
        (m-2-3) edge node [above]{$pd$} (m-2-4)
        (m-2-4) edge node [above]{$d$} (m-2-5)
        (m-2-5) edge node [above]{$d$} (m-2-6)
        (m-2-6) edge node [above]{$d$} (m-2-7)
       ;
\end{tikzpicture}
\end{equation*}
and
\begin{equation*}
\begin{adjustbox}{width=12cm}
\tag{2.4}
\label{equation_2_4}
\begin{tikzpicture}[descr/.style={fill=white,inner sep=1.5pt}]
        \matrix (m) [
            matrix of math nodes,
            row sep=2.5em,
            column sep=0.5em,
            text height=1.5ex, text depth=0.25ex
        ]
        {  0 & \tilde{\mathfrak{a}}^{r}W_{n}\mathcal{O} & \cdots & \tilde{\mathfrak{a}}^{2}W_{n}\Omega^{r-2} &  \tilde{\mathfrak{a}}W_{n}\Omega^{r-1} & 0 & \cdots \\
            (W_{n-1}\mathcal{O})_{[F]} & (W_{n-1}\Omega^{1})_{[F]} & \cdots & (W_{n-1}\Omega^{r-1})_{[F]} & W_{n}\Omega^{r} & W_{n}\Omega^{r+1} & \cdots\\
        };

        \path[overlay,->, font=\scriptsize]
        (m-1-1) edge (m-2-1)
        (m-1-2) edge node [left]{$0$} (m-2-2)
        (m-1-4) edge node [left]{$0$} (m-2-4)
        (m-1-5) edge node [left]{$d$} (m-2-5)
        (m-1-6) edge (m-2-6);
        
        \path[overlay,->, font=\scriptsize]
        (m-1-1) edge (m-1-2)
        (m-1-2) edge node [above]{$-d$} (m-1-3)
        (m-1-3) edge node [above]{$-d$} (m-1-4)
        (m-1-4) edge node [above]{$-d$} (m-1-5)
        (m-1-5) edge (m-1-6)
        (m-1-6) edge (m-1-7)
        (m-2-1) edge node [above]{$d$} (m-2-2)
        (m-2-2) edge node [above]{$d$} (m-2-3)
        (m-2-3) edge node [above]{$d$} (m-2-4)
        (m-2-4) edge node [above]{$dV$} (m-2-5)
        (m-2-5) edge node [above]{$d$} (m-2-6)
        (m-2-6) edge node [above]{$d$} (m-2-7)
       ;
\end{tikzpicture}
\end{adjustbox}
\end{equation*}
(we briefly omitted the subscripts for typographical reasons). We will construct a morphism of distinguished triangles in the derived category
\begin{equation*}
\tag{2.5}
\label{equation_2_5}
\begin{tikzpicture}[descr/.style={fill=white,inner sep=1.5pt}]
        \matrix (m) [
            matrix of math nodes,
            row sep=2em,
            column sep=2em,
            text height=1.5ex, text depth=0.25ex
        ]
        {  \tilde{\mathfrak{a}}^{(r)}\Omega_{(X_{S})_{n}/W_{n}(S)}^{<r}[-1] & \mathcal{F}^{r}\Omega_{(X_{S})_{n}/W_{n}(S)}^{\bullet} & \mathcal{F}il_{rel/R}^{r}\Omega_{(X_{S})_{n}/W_{n}(S)}^{\bullet} & \ \\
            \tilde{\mathfrak{a}}^{(r)}W_{n}\Omega_{X_{S}/S}^{<r}[-1] & \mathcal{N}^{r}W_{n}\Omega_{X_{S}/S}^{\bullet} & \mathcal{N}_{rel/R}^{r}W_{n}\Omega_{X_{S}/S}^{\bullet} & \ \\
        };

        \path[overlay,->, font=\scriptsize]
        (m-1-1) edge (m-2-1)
        (m-1-2) edge (m-2-2)
        (m-1-3) edge (m-2-3);        
        
        \path[overlay,->, font=\scriptsize]
        (m-1-1) edge node [above] {$f$} (m-1-2)
        (m-1-2) edge (m-1-3)
        (m-1-3) edge node [above] {$+1$} (m-1-4)
        (m-2-1) edge node [above]{$g$} (m-2-2)
        (m-2-2) edge (m-2-3)
        (m-2-3) edge node [above] {$+1$} (m-2-4);

\end{tikzpicture}
\end{equation*}
where the middle vertical arrow is the quasi-isomorphism of Theorem \ref{Theorem 0.2 of Lan18} and the left vertical map is defined as follows:
\par
Assume first that there exists a closed embedding $(X_{S})_{n}\xrightarrow{i}Z_{n}$ into a projective smooth $W_{n}(S)$-scheme which is a Witt lift of $Z_{n}\times_{\text{Spec }W_{n}(S)}\text{Spec S}=Z_{S}$ in the sense of \cite{LZ04} Definition 3.3. Such a Witt lift always exists locally (\cite{LZ04} Prop. 3.2 and remarks after Def. 3.3) and induces maps $\mathcal{O}_{Z_{n}}\rightarrow W_{n}(\mathcal{O}_{Z_{S}})\rightarrow W_{n}(\mathcal{O}_{X_{S}})$. Let $\mathcal{O}_{D_{n}}$ be the PD-envelope of $i$ and let $\mathscr{I}^{[r]}$ be the divided power ideal with $\mathscr{I}=\ker(\mathcal{O}_{Z_{n}}\rightarrow\mathcal{O}_{(X_{S})_{n}})$. The comparison with crystalline cohomology yields a chain of quasi-isomorphisms \cite{BO78} Theorem 7.1 and \cite{LZ04} Theorem 3.5
\begin{equation*}
\tag{2.6}
\label{equation_2_6}
\Omega_{(X_{S})_{n}/W_{n}(S)}^{\bullet}\xleftarrow{\cong}\Omega_{D_{n}/W_{n}(S)}^{\bullet}\xrightarrow{\cong}W_{n}\Omega_{X_{S}/S}^{\bullet}
\end{equation*}
We construct a complex $\tilde{\mathfrak{a}}^{(r)}\Omega_{D_{n}/W_{n}(S)}^{<r}$ together with a diagram of maps
\begin{equation*}
\tag{2.7}
\label{equation_2_7}
\tilde{\mathfrak{a}}^{(r)}\Omega_{(X_{S})_{n}/W_{n}(S)}^{<r}\leftarrow\tilde{\mathfrak{a}}^{(r)}\Omega_{D_{n}/W_{n}(S)}^{<r}\rightarrow\tilde{\mathfrak{a}}^{(r)}W_{n}\Omega_{X_{S}/S}^{<r}
\end{equation*}
The argument is very similar to the proof of \cite{Lan18} Theorem 0.2. Consider the following diagram for $r<p$
\begin{equation*}
\begin{adjustbox}{width=12cm}
\tag{2.8}
\label{equation_2_8}
\begin{tikzpicture}[descr/.style={fill=white,inner sep=1.5pt}]
        \matrix (m) [
            matrix of math nodes,
            row sep=2em,
            column sep=1.5em,
            text height=1.5ex, text depth=0.25ex
        ]
        {  \tilde{\mathfrak{a}}^{r}\mathcal{O}_{D_{n}} & \ & \ & \ & \ & \ \\
           \tilde{\mathfrak{a}}^{r-1}\mathscr{I} & \tilde{\mathfrak{a}}^{r-1}\Omega_{D_{n}}^1 & \ & \ & \ & \ \\
           \vdots & \vdots & \ddots & \ & \ & \ \\
           \tilde{\mathfrak{a}}^{2}\mathscr{I}^{[r-2]}\mathcal{O}_{D_{n}} & \tilde{\mathfrak{a}}^{2}\mathscr{I}^{[r-3]}\Omega_{D_{n}}^{1} & \cdots & \tilde{\mathfrak{a}}^{2}\Omega_{D_{n}}^{r-2} & \ & \ \\
           \tilde{\mathfrak{a}}\mathscr{I}^{[r-1]}\mathcal{O}_{D_{n}} & \tilde{\mathfrak{a}}\mathscr{I}^{[r-2]}\Omega_{D_{n}}^1 & \cdots & \cdots & \tilde{\mathfrak{a}}\Omega_{D_{n}}^{r-1}  & \tilde{\mathfrak{a}}\Omega_{D_{n}}^{r} & \cdots \\
                  };

        \path[overlay,->, font=\scriptsize]
        (m-1-1) edge node [above right] {$-d$} (m-2-2)
        (m-2-2) edge node [above right] {$-d$} (m-3-3)
        (m-3-3) edge node [above right] {$-d$} (m-4-4)
        (m-4-4) edge node [above right] {$-d$} (m-5-5);

        \path[overlay,->,font=\scriptsize]
        (m-2-1) edge node [above] {$-d$} (m-2-2)
        (m-4-1) edge node [above] {$-d$} (m-4-2)
        (m-4-2) edge node [above] {$-d$} (m-4-3)
        (m-4-3) edge node [above] {$-d$} (m-4-4)
        (m-5-1) edge node [above] {$-d$} (m-5-2)
        (m-5-2) edge node [above] {$-d$} (m-5-3)
        (m-5-4) edge node [above] {$-d$} (m-5-5)
        (m-5-5) edge node [above] {$-d$} (m-5-6)
        (m-5-6) edge node [above] {$-d$} (m-5-7);       
       
\end{tikzpicture}
\end{adjustbox}
\end{equation*}
The complex $\mathscr{I}^{[s]}\rightarrow\mathscr{I}^{[s-1]}\Omega_{D_{n}}^{1}\rightarrow\cdots\rightarrow\Omega_{D_{n}}^{s}\rightarrow\Omega_{D_{n}}^{s+1}$ is exact in degrees $<s$ and quasi-isomorphic to $\Omega_{(X_{S})_{n}/W_{n}(S)}^{\geq s}[-s]$ by \cite{BO78} Theorem 7.2. Since all $\mathscr{I}^{[l]}\Omega_{D_{n}}^{k}$ are - locally - free $\mathcal{O}_{(X_{S})_{n}}$-modules by \cite{BO78} Prop. 3.32, the complexes $\mathscr{I}^{[s-\bullet]}\Omega_{D_{n}}^{\bullet}$ remain exact in degrees $<s$ after $\otimes_{W_{n}(S)}S$ and $\otimes_{W_{n}(S)}R$; they coincide then with the corresponding complexes for the embeddings $X_{S}\rightarrow Z_{S}$, resp $X\rightarrow Z_{S}\times_{\text{Spec }S}\text{Spec }R$, since $\tilde{\mathfrak{a}}$ is a direct summand of $\ker(W_{n}(S)\rightarrow R)$ the lower horizontal sequence in (\ref{equation_2_8}) is exact in degrees $<r$ and quasi-isomorphic to $\tilde{\mathfrak{a}}\Omega_{(X_{S})_{n}/W_{n}(S)}^{\geq r-1}[-(r-1)]$. By an easy induction argument (replace $R$ by $S/\mathfrak{a}^{l}$) one sees that the other horizontal sequences in (\ref{equation_2_8}) are - up to the diagonal term - exact as well. The degree wise sum of the two lower sequences is quasi-isomorphic to
\begin{equation*}
\left(\tilde{\mathfrak{a}}^{2}\Omega_{(X_{S})_{n}/W_{n}(S)}^{r-2}\xrightarrow{-d}\tilde{\mathfrak{a}}\Omega_{(X_{S})_{n}/W_{n}(S)}^{r-1}\xrightarrow{-d}\tilde{\mathfrak{a}}\Omega_{(X_{S})_{n}/W_{n}(S)}^{r}\xrightarrow{-d}\cdots\right)[-(r-2)]
\end{equation*}
Finally, the degree wise sum of all horizontal sequences yields a complex denoted by $\tilde{\mathfrak{a}}^{(r)}\Omega_{D_{n}/W_{n}(S)}^{\bullet}$ which is quasi-isomorphic to $\tilde{\mathfrak{a}}^{(r)}\Omega_{(X_{S})_{n}/W_{n}(S)}^{\bullet}$. From the above we conclude that the complex $\tilde{\mathfrak{a}}\mathcal{F}il^{r}\Omega_{D_{n}}^{\bullet}$
\begin{equation*}
\tilde{\mathfrak{a}}\mathcal{J}^{[r]}\mathcal{O}_{D_{n}}\xrightarrow{-d}\tilde{\mathfrak{a}}\mathcal{J}^{[r-1]}\Omega_{D_{n}}^{1}\xrightarrow{-d}\cdots\xrightarrow{-d}\tilde{\mathfrak{a}}\Omega_{D_{n}}^{r}\xrightarrow{-d}\tilde{\mathfrak{a}}\Omega_{D_{n}}^{r+1}\xrightarrow{-d}\cdots
\end{equation*}
is quasi-isomorphic to
\begin{equation*}
(\tilde{\mathfrak{a}}\Omega_{(X_{S})_{n}/W_{n}(S)}^{r}\xrightarrow{-d}\tilde{\mathfrak{a}}\Omega_{(X_{S})_{n}/W_{n}(S)}^{r+1}\xrightarrow{-d}\tilde{\mathfrak{a}}\Omega_{(X_{S})_{n}/W_{n}(S)}^{r+2}\xrightarrow{-d}\cdots)[-r]
\end{equation*}
The natural embedding of $\tilde{\mathfrak{a}}\mathcal{F}il^{r}\Omega_{D_{n}}^{\bullet}$ into the lower horizontal complex in diagram (\ref{equation_2_8}) defines an injective map
\begin{equation*}
\tilde{\mathfrak{a}}\mathcal{F}il^{r}\Omega_{D_{n}}^{\bullet}\rightarrow\tilde{\mathfrak{a}}^{(r)}\Omega_{D_{n}/W_{n}(S)}^{\bullet}
\end{equation*}
We denote the mapping cone of this map by $\tilde{\mathfrak{a}}^{(r)}\Omega_{D_{n}/W_{n}(S)}^{<r}$. The notation is justified because the complex vanishes in degrees $\geq r$. We see that $\tilde{\mathfrak{a}}^{(r)}\Omega_{D_{n}/W_{n}(S)}^{<r}$ is quasi-isomorphic to $\tilde{\mathfrak{a}}^{(r)}\Omega_{(X_{S})_{n}/W_{n}(S)}^{<r}$.
\par
Note that under the canonical map $\mathcal{O}_{D_{n}}\rightarrow W_{n}(\mathcal{O}_{X_{S}})$ the image of $\mathscr{I}$ is contained in $VW_{n-1}(\mathcal{O}_{X_{S}})$ and hence the image of $\mathfrak{a}\cdot\mathscr{I}$ is zero in $W_{n}(\mathcal{O}_{X_{S}})$. Hence the map $\mathcal{O}_{D_{n}}\rightarrow W_{n}(\mathcal{O}_{X_{S}})$, compatible with Frobenius, induces a well-defined map
\begin{equation*}
\tilde{\mathfrak{a}}^{(r)}\Omega_{D_{n}/W_{n}(S)}^{\bullet}\rightarrow\tilde{\mathfrak{a}}^{(r)}W_{n}\Omega_{X_{S}/S}^{\bullet}
\end{equation*}
If one has two embeddings $(X_{S})_{n}\xrightarrow{i}Z_{n}$, $(X_{S})_{n}\xrightarrow{i'}Z'_{n}$ into Witt lifts then by considering the product embedding $(X_{S})_{n}\xrightarrow{(i,i')}Z_{n}\times Z'_{n}$ we get a well-defined map in the derived category
\begin{equation*}
\tilde{\mathfrak{a}}^{(r)}\Omega_{(X_{S})_{n}/W_{n}(S)}^{\bullet}\rightarrow\tilde{\mathfrak{a}}^{(r)}W_{n}\Omega_{X_{S}/S}^{\bullet}
\end{equation*}
which induces a map 
\begin{equation*}
\tilde{\mathfrak{a}}^{(r)}\Omega_{(X_{S})_{n}/W_{n}(S)}^{<r}\rightarrow\tilde{\mathfrak{a}}^{(r)}W_{n}\Omega_{X_{S}/S}^{<r}
\end{equation*}
on the truncated complexes. This defines (\ref{equation_2_7}).
\par
To show that it is a quasi-isomorphism is a Zariski-local question on $X_{S}$, so it suffices to check the case where $X_{S}=\text{Spec B}$ is affine and $B$ is \'{e}tale over a polynomial algebra $A:=S[T_{1},\ldots,T_{d}]$. Set $A_{n}:=W_{n}(S)[T_{1},\ldots,T_{d}]$ and let $\phi_{n}:A_{n}\rightarrow A_{n-1}$ be the map extending $F:W_{n}(S)\rightarrow W_{n-1}(S)$ given by setting $\phi_{n}(T_{i})=T_{i}^{p}$, and let $\delta_{n}:A_{n}\rightarrow W_{n}(A)$ be the unique $W_{n}(S)$-algebra homomorphism which sends each $T_{i}$ to its Teichm\"{u}ller representative. Then the data $(A_{n},\phi_{n},\delta_{n})$ is a Frobenius lift of $A$ to $W(S)$ (see \S3 of \cite{LZ04}). Since $A\rightarrow B$ is \'{e}tale, there exists a unique set of liftings $B_{n}$ of $B$ which are each \'{e}tale over $A_{n}$, and homomorphisms $\psi_{n}:B_{n}\rightarrow B_{n-1}$, $\epsilon_{n}:B_{n}\rightarrow W_{n}(B)$ which are compatible with $\phi_{n},\delta_{n}$. The morphism $\tilde{\mathfrak{a}}^{(r)}\Omega_{B_{n}/W_{n}(S)}^{\bullet}\rightarrow\tilde{\mathfrak{a}}^{(r)}W_{n}\Omega_{B/S}^{\bullet}$ is the one induced by the $\epsilon_{n}$.
\par
First let us treat the special case that $B=A=S[T_{1},\ldots,T_{d}]$; the quasi-isomorphism is easy to establish because in this case the de Rham-Witt complex has a rather explicit description (originally due to Illusie in the case of a perfect field, and by \S2 of \cite{LZ04} in our generality). Indeed, the de Rham-Witt complex decomposes into a direct sum of an integral part and an acyclic fractional part (\cite{LZ04}, (3.9)) and the fractional part is contained in the image of $V$ resp. $dV$. We must check that the fractional part is still acyclic after multiplying by the logarithmic Teichm\"{u}ller ideal $\tilde{\mathfrak{a}}$. But multiplying anything in the image of $V$ by an element $a$ means applying Frobenius to $a$, and Frobenius kills $\tilde{\mathfrak{a}}$ by \cite{Zin02} Lemma 38, so we conclude that the fractional part of the de Rham-Witt complex is annihilated by $\tilde{\mathfrak{a}}$. 
\par
Now we return to the general case where $X_{S}=\text{Spec }B$ for $B$ \'{e}tale over $A=S[T_{1},\ldots,T_{d}]$. Choose an integer $m$ such that $p^{m}W_{n}(S)=0$ and set $\phi^{m}:=\phi_{m+n}\circ\cdots\circ\phi_{n+1}:A_{m+n}\rightarrow A_{n}$. Then for each $l$ we have an isomorphism (see the proof of \cite{LZ04} Theorem 3.5)
\begin{equation*}
\Omega_{B_{n}/W_{n}(S)}^{l}\cong B_{n}\otimes_{A_{n}}\Omega_{A_{n}/W_{n}(S)}^{l}\cong B_{m+n}\otimes_{A_{m+n},\phi^{m}}\Omega_{A_{n}/W_{n}(S)}^{l}
\end{equation*}
and, likewise, for each $j$, $l$ an isomorphism
\begin{equation*}
\tilde{\mathfrak{a}}^{j}\Omega_{B_{n}/W_{n}(S)}^{l}\cong B_{n}\otimes_{A_{n}}\tilde{\mathfrak{a}}^{j}\Omega_{A_{n}/W_{n}(S)}^{l}\cong B_{m+n}\otimes_{A_{m+n},\phi^{m}}\tilde{\mathfrak{a}}^{j}\Omega_{A_{n}/W_{n}(S)}^{l}
\end{equation*}
and this gives an isomorphism of complexes
\begin{equation*}
\tilde{\mathfrak{a}}^{(r)}\Omega_{B_{n}/W_{n}(S)}^{\bullet}\cong B_{m+n}\otimes_{A_{m+n},\phi^{m}}\tilde{\mathfrak{a}}^{(r)}\Omega_{A_{n}/W_{n}(S)}^{\bullet}
\end{equation*}
if we give the right-hand side the differential $1\otimes -d$. 
\par
Let 
\begin{equation*}
W_{n}\Omega_{A/S}^{\bullet}=W_{n}\Omega_{A/S}^{\integral,\bullet}\oplus W_{n}\Omega_{A/S}^{\fractional,\bullet}
\end{equation*}
be the decomposition into integral and fractional parts, as mentioned above. Note that this is a direct sum decomposition of complexes of $A_{m+n}$-modules via restriction of scalars $A_{m+n}\xrightarrow{\phi^{m}}A_{n}$. Then using the following facts proven in \cite{LZ04} Theorem 3.5 and Prop. 1.7
\begin{enumerate}
\item base change of $W_{n}\Omega^{\bullet}$ for \'{e}tale maps
\item $W_{n}\Omega_{B/S}^{\bullet}\cong B_{m+n}\otimes_{A_{m+n},\phi^{m}}W_{n}\Omega_{A/S}^{\bullet}$
\end{enumerate}
we have
\begin{align*}
W_{n}\Omega_{B/S}^{\bullet}
& = (B_{m+n}\otimes_{A_{m+n}}W_{n}\Omega_{A/S}^{\integral,\bullet})\oplus (B_{m+n}\otimes_{A_{m+n}}W_{n}\Omega_{A/S}^{\fractional,\bullet}) \\
& =:W_{n}\Omega_{B/S}^{\integral,\bullet}\oplus W_{n}\Omega_{B/S}^{\fractional,\bullet}
\end{align*}
Since $W_{n}\Omega_{A/S}^{\fractional,\bullet}$ is acyclic and $B_{m+n}$ is a flat $A_{m+n}$-module, $W_{n}\Omega_{B/S}^{\fractional,\bullet}$ is acyclic too.
\par
Then we define complexes $\tilde{\mathfrak{a}}^{(r)}W_{n}\Omega_{B/S}^{\bullet}$, $\tilde{\mathfrak{a}}^{(r)}W_{n}\Omega_{B/S}^{\integral,\bullet}$ and $\tilde{\mathfrak{a}}^{(r)}W_{n}\Omega_{B/S}^{\fractional,\bullet}$ in exactly the same manner as at the beginning of the proof. Evidently we get a direct sum decomposition 
\begin{equation*}
\tilde{\mathfrak{a}}^{(r)}W_{n}\Omega_{B/S}^{\bullet}=\tilde{\mathfrak{a}}^{(r)}W_{n}\Omega_{B/S}^{\integral,\bullet}\oplus \tilde{\mathfrak{a}}^{(r)}W_{n}\Omega_{B/S}^{\fractional,\bullet}
\end{equation*}
Since $W_{n}\Omega_{A/S}^{\integral,\bullet}\cong\Omega_{A_{n}/W_{n}(S)}^{\bullet}$, we get an isomorphism
\begin{align*}
\tilde{\mathfrak{a}}^{(r)}W_{n}\Omega_{B/S}^{\integral,\bullet}
& \cong B_{m+n}\otimes_{A_{m+n},\phi^{m}}\tilde{\mathfrak{a}}^{(r)}W_{n}\Omega_{A/S}^{\integral,\bullet} \\
& \cong B_{m+n}\otimes_{A_{m+n},\phi^{m}}\tilde{\mathfrak{a}}^{(r)}\Omega_{A_{n}/W_{n}(S)}^{\bullet} \\
& \cong \tilde{\mathfrak{a}}^{(r)}\Omega_{B_{n}/W_{n}(S)}^{\bullet}
\end{align*}
Since $\tilde{\mathfrak{a}}$ annihilates the fractional part $W_{n}\Omega_{A/S}^{\bullet}$ as observed above, we get that $\tilde{\mathfrak{a}}^{(r)}W_{n}\Omega_{B/S}^{\fractional,\bullet}$ vanishes. Hence we obtain an isomorphism
\begin{equation*}
\tilde{\mathfrak{a}}^{(r)}W_{n}\Omega_{B/S}^{\bullet}\cong\tilde{\mathfrak{a}}^{(r)}\Omega_{B_{n}/W_{n}(S)}^{\bullet}
\end{equation*}
and likewise for the truncated complexes
\begin{equation*}
\tilde{\mathfrak{a}}^{(r)}W_{n}\Omega_{B/S}^{<r}\cong\tilde{\mathfrak{a}}^{(r)}\Omega_{B_{n}/W_{n}(S)}^{<r}
\end{equation*}
 as desired. Since the construction of this isomorphism using PD-envelopes of embeddings into Witt lifts is compatible with the construction of the comparison map 
\begin{equation*}
\mathcal{F}^{r}\Omega_{(X_{S})_{n}/W_{n}(S)}^{\bullet}\rightarrow\mathcal{N}^{r}W_{n}\Omega_{X/S}^{\bullet}
\end{equation*}
(\cite{Lan18}, Theorem 0.2) the diagram (\ref{equation_2_5}) commutes on the left. 

Let $\mathcal{F}il^{r}\Omega_{D_{n}/W_{n}(S)}^{\bullet}$ be the complex constructed in the proof of \cite[Theorem 0.2]{Lan18} (see page 1868). Analogously to the map 
\begin{equation*}
f:\tilde{\mathfrak{a}}^{(r)}\Omega_{(X_{S})_{n}/W_{n}(S)}^{<r}[-1]\rightarrow\mathcal{F}^{r}\Omega_{(X_{S})_{n}/W_{n}(S)}^{\bullet}
\end{equation*}
in \eqref{equation_2_3}, one can define a canonical map
\begin{equation*}
\tilde{f}:\tilde{\mathfrak{a}}^{(r)}\Omega_{D_{n}/W_{n}(S)}^{<r}[-1]\rightarrow\mathcal{F}il^{r}\Omega_{D_{n}/W_{n}(S)}^{\bullet}
\end{equation*}
which is the zero map in degrees $\neq r$ and equal to $d$ in degree $r$. Let $\mathcal{F}il^{r}_{rel/R}\Omega_{D_{n}/W_{n}(S)}^{\bullet}$ be the mapping cone of $\tilde{f}$. One obtains a commutative diagram of complexes
\begin{equation*}
\begin{adjustbox}{width=12cm}
\tag{2.9}
\label{equation_2_9}
\begin{tikzpicture}[descr/.style={fill=white,inner sep=1.5pt}]
        \matrix (m) [
            matrix of math nodes,
            row sep=2.5em,
            column sep=2.5em,
            text height=1.5ex, text depth=0.25ex
        ]
        { \tilde{\mathfrak{a}}^{(r)}\Omega_{(X_{S})_{n}/W_{n}(S)}^{<r}[-1] & \mathcal{F}^{r}\Omega_{(X_{S})_{n}/W_{n}(S)}^{\bullet} & \mathcal{F}il^{r}_{rel/R}\Omega_{(X_{S})_{n}/W_{n}(S)}^{\bullet} & \cdots \\
        \tilde{\mathfrak{a}}^{(r)}\Omega_{D_{n}/W_{n}(S)}^{<r}[-1] & \mathcal{F}il^{r}\Omega_{D_{n}/W_{n}(S)}^{\bullet} & \mathcal{F}il^{r}_{rel/R}\Omega_{D_{n}/W_{n}(S)}^{\bullet} & \cdots \\
        \tilde{\mathfrak{a}}^{(r)}W_{n}\Omega_{X_{S}/S}^{<r}[-1] & \mathcal{N}^{r}W_{n}\Omega_{X_{S}/S}^{\bullet} & \mathcal{N}^{r}_{rel/R}W_{n}\Omega_{X_{S}/S}^{\bullet} & \cdots \\
                 };

        \path[overlay,->, font=\scriptsize]
        (m-1-1) edge node [above] {$f$} (m-1-2)
        (m-1-2) edge (m-1-3)
        (m-1-3) edge node [above] {$+1$} (m-1-4)
        (m-2-1) edge node [above] {$\tilde{f}$} (m-2-2)
        (m-2-2) edge (m-2-3)
        (m-2-3) edge node [above] {$+1$} (m-2-4)
        (m-3-1) edge node [above] {$g$} (m-3-2)
        (m-3-2) edge (m-3-3)
        (m-3-3) edge node [above] {$+1$} (m-3-4)
        (m-2-1) edge (m-1-1)
        (m-2-1) edge (m-3-1)
        (m-2-2) edge (m-1-2)
        (m-2-2) edge (m-3-2)
        (m-2-3) edge (m-1-3)
        (m-2-3) edge (m-3-3);
        
\end{tikzpicture} 
\end{adjustbox}
\end{equation*}
where the right vertical arrows are canonical maps induced on the level of mapping cones by the commutative diagrams of complexes on the left. Since the vertical arrows on the left and in the middle of the diagram are quasi-isomorphisms by construction, the vertical arrows on the right hand side are also quasi-isomorphisms. This proves that \eqref{equation_2_5} is a morphism of distinguished triangles in the derived category where all vertical arrows are isomorphisms.

In the absence of a global embedding into a Witt lift one proceeds by simplicial methods as in the proof of \cite{Lan18} Theorem 0.2, \cite{LZ04} \S3.2 and \cite{Ill79} II.1. to obtain Theorem \ref{Relative version of Theorem 0.2 of Lan18}. For the convenience of the reader we recall the argument. Let $(X_{S})_{n}(i)$, $i\in I$ be a covering of $(X_{S})_{n}$ inducing a covering $X_{S}(i)$ of $X_{S}$ and an embedding $(X_{S})_{n}(i)\rightarrow(Z_{S})_{n}(i)$ which is a Witt lift of $Z_{S}(i):=(Z_{S})_{n}(i)\times_{W_{n}(S)}S$. We set
\begin{equation*}
(X_{S})_{n}(i_{1},\ldots,i_{r})=(X_{S})_{n}(i_{1})\cap\cdots\cap(X_{S})_{n}(i_{r})
\end{equation*} 
(and likewise for $X_{S}$ itself) and
\begin{equation*}
(Z_{S})_{n}(i_{1},\ldots,i_{r})=(Z_{S})_{n}(i_{1})\times_{W_{n}(S)}\cdots\times_{W_{n}(S)}(Z_{S})_{n}(i_{r})
\end{equation*} 
We denote by $D_{n}(i_{1},\ldots,i_{r})$ the PD-envelope of the canonical morphism $(X_{S})_{n}(i_{1},\ldots,i_{r})\rightarrow (Z_{S})_{n}(i_{1},\ldots,i_{r})$. One gets simplicial schemes $X_{S}^{\bullet}\rightarrow(X_{S})_{n}^{\bullet}\rightarrow D_{n}^{\bullet}\rightarrow(Z_{S})_{n}^{\bullet}$ and an isomorphism in the derived category of simplicial complexes of sheaves on $X_{S}^{\bullet}$
\begin{equation*}
\mathcal{F}il_{rel/R}^{r}\Omega_{(X_{S})_{n}^{\bullet}}^{\bullet}\rightarrow\mathcal{N}_{rel/R}^{r}W_{n}\Omega_{X_{S}^{\bullet}/S}^{\bullet}
\end{equation*}
Let $X_{S}^{\bullet}\xrightarrow{\theta}X_{S}$ be the natural augmentation. By applying $R\theta_{\ast}$ we get by cohomological descent in the Zariski topology the desired isomorphism in Theorem \ref{Relative version of Theorem 0.2 of Lan18}. 
\end{proof}
We now prove Theorem \ref{Relative displays theorem}(a). As in Theorem \ref{Theorem 0.2 of Lan18}, the isomorphisms between the complexes in Theorem \ref{Relative version of Theorem 0.2 of Lan18} are compatible for varying $n$. One first assumes the existence of a compatible system of embeddings into Witt lifts; in the general case one uses again simplicial methods as outlined in \cite{Lan18} to obtain an isomorphism of procomplexes $\mathcal{F}il_{rel/R}^{r}\Omega_{(X_{S})_{\bullet}/W_{\bullet}(S)}^{\bullet}\cong\mathcal{N}_{rel/R}^{r}W_{\bullet}\Omega_{X_{S}/S}^{\bullet}$. Let
\begin{equation*}
(\mathcal{P}_{S/R})_{r}=\mathbb{H}^{n}(X_{S},\mathcal{N}^{r}_{rel/R}W_{\bullet}\Omega_{X_{S}/S}^{\bullet})=\mathbb{H}^{n}(X_{S},\mathcal{F}il_{rel/R}^{r}\Omega_{(X_{S})_{\bullet}/W_{\bullet}(S)}^{\bullet})
\end{equation*}
Using the same argument as in the proof of Theorem \ref{Display theorem}(a), we see that the $E_{1}$-spectral sequence associated to the complex $\mathcal{F}il_{rel/R}^{r}\Omega_{(X_{S})_{\bullet}/W_{\bullet}(S)}^{\bullet}$ degenerates. This implies a decomposition
\begin{equation*}
(\mathcal{P}_{S/R})_{r}=\mathcal{J}_{r}L_{0}\oplus\mathcal{J}_{r-1}L_{1}\oplus\cdots\oplus\mathcal{J}L_{r-1}\oplus L_{r}\oplus\cdots\oplus L_{n} 
\end{equation*}
where $L_{i}=H^{n-i}((X_{S})_{\bullet},\Omega_{(X_{S})_{\bullet}/W_{\bullet}(S)}^{i})$ and $\mathcal{J}_{i}=\tilde{\mathfrak{a}}^{i}\oplus I_{S}$ (compare the construction of standard displays over the relative Witt frame $\mathcal{W}_{S/R}$ in \cite{LZ19} and Appendix, Def. \ref{standard}).
\par
The maps $\hat{F}_{r}:(\mathcal{P}_{S/R})_{r}\rightarrow (\mathcal{P}_{S/R})_{0}=H_{\cris}^{n}(X/W(S))$ induce maps $\Phi_{r}:L_{r}\rightarrow (\mathcal{P}_{S})_{0}$ by $\Phi_{r}=\hat{F}_{r}|L_{r}$. To show that $( (\mathcal{P}_{S/R})_{r},\hat{F}_{r},\hat{\iota}_{r},\hat{\alpha}_{r})$ defines a relative display on $H_{\cris}^{n}(X/W(S))$ is equivalent to the condition that 
\begin{equation*}
\bigoplus_{i=0}^{n}\Phi_{i}:L_{0}\oplus\cdots\oplus L_{n}\rightarrow (\mathcal{P}_{S})_{0}
\end{equation*}
is a $\sigma$-linear isomorphism. The argument is the same as in the proof of Theorem \ref{Display theorem}(a). On the level of standard displays (i.e. displays given by standard data), the relative display associated to the display $\mathcal{P}_{S}$ is given by the inclusions (Appendix, Remark \ref{base change})
\begin{equation*}
I_{S}L_{0}\oplus\cdots\oplus I_{S}L_{r-1}\oplus L_{r}\oplus\cdots\oplus L_{n}\hookrightarrow\mathcal{J}_{r}L_{0}\oplus\cdots\oplus\mathcal{J}L_{r-1}\oplus L_{r}\oplus\cdots\oplus L_{n}
\end{equation*}
Since the chain of quasi-isomorphisms between $\mathcal{F}^{r}\Omega_{(X_{S})_{n}/W_{n}(S)}^{\bullet}$ and $\mathcal{N}^{r}W_{n}\Omega_{X_{S}/S}^{\bullet}$ and between $\mathcal{F}il_{rel/R}^{r}\Omega_{(X_{S})_{n}/W_{n}(S)}^{\bullet}$ and $\mathcal{N}^{r}_{rel/R}W_{n}\Omega_{X_{S}/S}^{\bullet}$ are compatible under the inclusion maps (see the commutative diagram (\ref{equation_2_5}))
\begin{equation*}
\mathcal{F}^{r}\Omega_{(X_{S})_{n}/W_{n}(S)}^{\bullet}\hookrightarrow\mathcal{F}il_{rel/R}^{r}\Omega_{(X_{S})_{n}/W_{n}(S)}^{\bullet} 
\end{equation*}
and
\begin{equation*}
\mathcal{N}^{r}W_{n}\Omega_{X_{S}/S}^{\bullet}\hookrightarrow\mathcal{N}^{r}_{rel/R}W_{n}\Omega_{X_{S}/S}^{\bullet}
\end{equation*}
we conclude that $u_{\ast}\mathcal{P}_{S}=\mathcal{P}_{S/R}$. This proves Theorem \ref{Relative displays theorem}(a).
\subsection{Proof of Theorem \ref{Display theorem}(b)}
Let $A\rightarrow R$ be a frame for $R$ such that the kernel $\mathfrak{a}$ is equipped with divided powers, by definition $A$ is equipped with a lifting $\sigma:A\rightarrow A$ of the Frobenius $A/pA\rightarrow A/pA$. We consider the Cartier map $A\rightarrow W(A)$ into the Witt ring (\cite{Ill79}, 0.1.3.16). Then  $A\rightarrow R$ factors through
\begin{equation*}
A\rightarrow W(A)\rightarrow W(R)\rightarrow R
\end{equation*}
The kernel $\mathcal{J}$ of $W(A)\rightarrow R$ is $\tilde{\mathfrak{a}}\oplus VW(A)=\tilde{\mathfrak{a}}\oplus I_{A}$, where $\tilde{\mathfrak{a}}$ is the logarithmic Teichm\"{u}ller ideal, equipped again with divided powers. We then get a second frame $(W(A),\mathcal{J},\sigma,\dot{\sigma})$ for $R$, where $\sigma$ is the Frobenius on $W(A)$ and $\dot{\sigma}:\mathcal{J}\rightarrow W(A)$, $a+V\xi\mapsto\xi$. This is the definition of the relative Witt frame $\mathcal{W}_{A/R}$ (Appendix, Def. \ref{relative Witt frame}).
\par
Assuming the existence of liftings $\mathcal{Y}/\text{Spf }A$ of $X$ that satisfy (A1) and (A2), we get by base change liftings $\tilde{\mathcal{Y}}/\text{Spf }W(A)$ that also satisfy (A1) and (A2). It is therefore enough to show Theorem \ref{Display theorem}(b) by working with the relative Witt frame $\mathcal{W}_{A/R}$ and the lifting $\tilde{\mathcal{Y}}$. Then a window over $W(A)$ (\cite{LZ07}, Def. 5.1 and Appendix, Def. \ref{window}) is the same as a display $\mathcal{P}_{A/R}$ over the relative Witt frame $\mathcal{W}_{A/R}$. We denote now by $\mathcal{P}_{A/R}$ the display associated to the lifting $\tilde{\mathcal{Y}}$ that exists by (\cite{LZ07}, Thm. 5.5). Let $Y_{m,s}:=\tilde{\mathcal{Y}}\times_{W(A)}W_{s}(A/p^{m})$. Then
\begin{equation*}
(\mathcal{P}_{A/R})_{r}=\varprojlim_{s,m}H_{\cris}^{n}(X,\mathcal{J}_{X/W_{s}(A/p^{m})}^{[r]})
\end{equation*}
is equipped with a divided Frobenius $F_{r}=\frac{F}{p^{r}}$ where $F$ is the Frobenius on crystalline cohomology. Assume $A\rightarrow R$ factors through $A/p^{m}\rightarrow R$. By (\cite{BO78}, Thm. 7.2) the groups
\begin{equation*}
H_{\cris}^{n}(X,\mathcal{J}_{X/W(A/p^{m})}^{[r]})=\varprojlim_{s}H_{\cris}^{n}(X,\mathcal{J}_{X/W_{s}(A/p^{m})}^{[r]})
\end{equation*}
are the hypercohomology groups of the procomplexes $\mathcal{F}il^{[r]}\Omega_{Y_{\bullet,m}/W_{\bullet}(A/p^{m})}^{\bullet}$ defined as follows:
\begin{center}
\resizebox{1.0\linewidth}{!}{
  \begin{minipage}{\linewidth}
\begin{align*}
(\tilde{\mathfrak{a}}_{m}^{[r]}\oplus p^{r-1}I_{A/p^{m}})
& \Omega_{Y_{\bullet,m}/W_{\bullet}(A/p^{m})}^{0}\xrightarrow{d}(\tilde{\mathfrak{a}}_{m}^{[r-1]}\oplus p^{r-2}I_{A/p^{m}})\Omega_{Y_{\bullet,m}/W_{\bullet}(A/p^{m})}^{1}\xrightarrow{d}\cdots \\
& \cdots\xrightarrow{d}(\tilde{\mathfrak{a}}_{m}\oplus I_{A/p^{m}})\Omega_{Y_{\bullet,m}/W_{\bullet}(A/p^{m})}^{r-1}\xrightarrow{d}\Omega_{Y_{\bullet,m}/W_{\bullet}(A/p^{m})}^{r}\xrightarrow{d}\cdots
\end{align*}
\end{minipage}}
\end{center}
where $\tilde{\mathfrak{a}}_{m}$ is the logarithmic Teichm\"{u}ller ideal associated to $\mathfrak{a}_{m}:=\ker(A/p^{m}\rightarrow R)$ and $I_{A/p^{m}}:=VW(A/p^{m})$, and we have used that $\tilde{\mathfrak{a}}_{m}\cdot I_{A/p^{m}}=0$ and for the ideal $\mathcal{J}_{m}=\ker(W(A/p^{m})\rightarrow R)$ we have $\mathcal{J}_{m}^{[s]}=\tilde{\mathfrak{a}}_{m}^{[s]}\oplus p^{s-1}I_{A/p^{m}}$.
\par 
As $A$ and $W(A)$ are $p$-torsion free, multiplication by $p$ on $W(A)$ and the pro-group $W_{\bullet}(A/p^{\bullet})$ is injective, hence the procomplexes $\mathcal{F}il^{[r]}\Omega_{Y_{\bullet,\bullet}/W_{\bullet}(A/p^{\bullet})}^{\bullet}$ and $\mathcal{F}il_{rel/R}^{[r]}\Omega_{Y_{\bullet,\bullet}/W_{\bullet}(A/p^{\bullet})}^{\bullet}$, defined as 
\begin{align*}
(\tilde{\mathfrak{a}}_{\bullet}^{[r]}\oplus I_{A/p^{\bullet}})
& \Omega_{Y_{\bullet,\bullet}/W_{\bullet}(A/p^{\bullet})}^{0}\xrightarrow{d\oplus pd}(\tilde{\mathfrak{a}}_{\bullet}^{[r-1]}\oplus I_{A/p^{\bullet}})\Omega_{Y_{\bullet,\bullet}/W_{\bullet}(A/p^{\bullet})}^{1}\xrightarrow{d\oplus pd}\cdots \\
& \cdots\xrightarrow{d\oplus pd}(\tilde{\mathfrak{a}}_{\bullet}\oplus I_{A/p^{\bullet}})\Omega_{Y_{\bullet,\bullet}/W_{\bullet}(A/p^{\bullet})}^{r-1}\xrightarrow{d}\Omega_{Y_{\bullet,\bullet}/W_{\bullet}(A/p^{\bullet})}^{r}\xrightarrow{d}\cdots
\end{align*}
are isomorphic. By Theorem \ref{Relative version of Theorem 0.2 of Lan18}, the procomplexes $\mathcal{F}il_{rel/R}^{[r]}\Omega_{Y_{\bullet,\bullet}/W_{\bullet}(A/p^{\bullet})}^{\bullet}$ and $\mathcal{N}_{rel/R}^{r}W_{\bullet}\Omega_{Y_{\bullet}/(A/p^{\bullet})}^{\bullet}$, defined as
\begin{center}
\resizebox{1.0\linewidth}{!}{
  \begin{minipage}{\linewidth}
\begin{align*}
\tilde{\mathfrak{a}}^{r}_{\bullet}W_{\bullet}\mathcal{O}_{Y_{\bullet}}\oplus (W_{\bullet}
& 
\mathcal{O}_{Y_{\bullet}})_{[F]}\xrightarrow{d\oplus d}\tilde{\mathfrak{a}}^{r-1}_{\bullet}W_{\bullet}\Omega_{Y_{\bullet}/(A/p^{\bullet})}^{1}\oplus (W_{\bullet}\Omega_{Y_{\bullet}/(A/p^{\bullet})}^{1})_{[F]}\xrightarrow{d\oplus d}\cdots \\
& \cdots\xrightarrow{d\oplus d}\tilde{\mathfrak{a}}_{\bullet}W_{\bullet}\Omega_{Y_{\bullet}/(A/p^{\bullet})}^{r-1}\oplus (W_{\bullet}\Omega_{Y_{\bullet}/(A/p^{\bullet})}^{r-1})_{[F]}\xrightarrow{d+dV}W_{\bullet}\Omega_{Y_{\bullet}/(A/p^{\bullet})}^{r}\xrightarrow{d}\cdots
\end{align*}
\end{minipage}}
\end{center}
are quasi-isomorphic. This implies that
\begin{equation*}
(\mathcal{P}_{A/R})_{r}=\mathbb{H}^{n}(Y_{\bullet},\mathcal{N}_{rel/R}^{r}W_{\bullet}\Omega_{Y_{\bullet}/(A/p^{\bullet})}^{\bullet})
\end{equation*}
and the divided Frobenius on $(\mathcal{P}_{A/R})_{r}$ is induced by the divided Frobenius on $\mathcal{N}_{rel/R}^{r}W_{\bullet}\Omega_{Y_{\bullet}/(A/p^{\bullet})}^{\bullet}$. It is unique because $A$ and $W(A)$ are $p$-torsion free.
\par
The morphism of frames $\mathcal{W}_{A/R}\xrightarrow{\epsilon}\mathcal{W}_{R}$ induces a base change $\epsilon_{\ast}$ on displays. Let $\tilde{X}:=\tilde{\mathcal{Y}}\times_{\text{Spf }W(A)}\text{Spec }W(R)$ be the induced lifting of $X$ over $W(R)$. The standard display defined on $\tilde{L}_{0}\oplus\cdots\oplus\tilde{L}_{n}$ with
\begin{equation*}
\tilde{L}_{i}=H^{n-i}(\tilde{\mathcal{Y}},\Omega_{\tilde{\mathcal{Y}}/\text{Spf }W(A)}^{i})
\end{equation*}
is transformed into the standard display on $L_{0}\oplus\cdots\oplus L_{n}$ with
\begin{equation*}
L_{i}=\tilde{L}_{i}\otimes_{W(A)}W(R)=H^{n-i}(\tilde{X},\Omega_{\tilde{X}/W(R)}^{i})
\end{equation*}
Note that under the composite map $\kappa:A\rightarrow W(A)\rightarrow W(R)$, $\kappa(a)\in I_{R}$ for $a\in\mathfrak{a}$, hence the image of $\tilde{\mathfrak{a}}\oplus I_{A}$ in $W(R)$ is $I_{R}$.
\par
We have canonical reduction maps
\begin{equation*}
\mathcal{F}il_{rel/R}^{[r]}\Omega_{Y_{\bullet,\bullet}/W_{\bullet}(A/p^{\bullet})}^{\bullet}\rightarrow\mathcal{F}^{r}\Omega_{\tilde{X}/W_{\bullet}(R)}^{\bullet}
\end{equation*}
and
\begin{equation*}
\mathcal{N}_{rel/R}^{r}W_{\bullet}\Omega_{Y_{\bullet}/(A/p^{\bullet})}^{\bullet}\rightarrow\mathcal{N}^{r}W_{\bullet}\Omega_{X/R}^{\bullet}
\end{equation*}
Under the base change of displays $\epsilon_{\ast}\mathcal{P}_{A/R}$ is a display over $R$ with $(\epsilon_{\ast}\mathcal{P}_{A/R})_{r}$ given by the hypercohomology of $\mathcal{F}^{r}\Omega_{\tilde{X}/W(R)}^{\bullet}$. Since the quasi-isomorphisms between $\mathcal{F}il_{rel/R}^{[r]}\Omega_{Y_{\bullet,\bullet}/W_{\bullet}(A/p^{\bullet})}^{\bullet}$ and $\mathcal{N}_{rel/R}^{r}W_{\bullet}\Omega_{Y_{\bullet}/(A/p^{\bullet})}^{\bullet}$ and between $\mathcal{F}^{r}\Omega_{\tilde{X}/W(R)}^{\bullet}$ and $\mathcal{N}^{r}W_{\bullet}\Omega_{X/R}^{\bullet}$ are compatible under the canonical reduction maps, we see that the divided Frobenius $F_{r}$ on $(\epsilon_{\ast}\mathcal{P}_{A/R})_{r}$ obtained by base change coincides with the divided Frobenius on the Nygaard complexes $\mathcal{N}^{r}W_{\bullet}\Omega_{X/R}^{\bullet}$. This finishes the proof of Theorem \ref{Display theorem}(b).
\subsection{Proof of Theorem \ref{Relative displays theorem}(b)}
As in the theorem, we assume that $R$ is an artinian local $W(k)$-algebra with residue field $k$, and that the special fibre $X_{0}$ is a smooth projective variety with smooth versal deformation space $\mathfrak{S}$. Write $\mathfrak{X}/\mathfrak{S}$ for the versal family. Then $\mathfrak{S}\cong\text{Spf }A$, where $A=W(k)\llbracket t_{1},\ldots,t_{h}\rrbracket$ is a formal power series algebra over $W(k)$.
\par
Suppose now that $X_{S}$ is a deformation of $X/R$ over a PD-thickening $S\twoheadrightarrow R$ and let us write $\mathcal{P}_{S}(X_{S})$ for the $\mathcal{W}_{S}$-display structure on $H_{\cris}^{n}(X/W(S))$. Write $u:\mathcal{W}_{S}\rightarrow\mathcal{W}_{S/R}$ for the frame homomorphism. Then we must prove that the relative display $\mathcal{P}_{S/R}=u_{\ast}\mathcal{P}_{S}$ does not depend on the lifting $X_{S}$. That is to say, given another deformation $X_{S}'$ of $X$ over $S$ with associated display $\mathcal{P}_{S}(X_{S}')$, the relative displays $\mathcal{P}_{S/R}(X_{S}):=u_{\ast}\mathcal{P}_{S}(X_{S})$ and $\mathcal{P}_{S/R}(X_{S}'):=u_{\ast}\mathcal{P}_{S}(X_{S}')$ coincide.
\par
By the versality of $\mathfrak{S}$, the deformations $X_{S}$ and $X_{S}'$ are induced by two $W(k)$-algebra homomorphisms $A\overset{x}{\underset{y}{\rightrightarrows}}
 S$. Let $\mathcal{A}_{\text{triv}}=(A,0,A,\sigma,\sigma/p)$ be the trivial frame for $A$ and write $\mathcal{P}_{A}^{\text{triv}}$ for the $\mathcal{A}_{\text{triv}}$-window structure on the versal family (given by \cite{LZ07} Thm 5.5). Then $\mathcal{P}_{S/R}(X_{S})$ and $\mathcal{P}_{S/R}(X_{S}')$ are the base change of $\mathcal{P}_{A}^{\text{triv}}$ along the two induced frame homomorphisms
\begin{equation*}
\mathcal{A}_{\text{triv}}\mathrel{\mathop{\rightrightarrows}^{x}_{y}}\mathcal{W}_{S/R}
\end{equation*} 
that arise from the commutative diagram
\begin{center}
\begin{tikzpicture}
\node (A) at (-1,0) {$A$};
\node (B) at (-1,-1.2) {$A$};
\node (C) at (1,0) {$S$};
\node (D) at (1,-1.2) {$R$};

\path[->,font=\scriptsize] (A) edge node [left] {$=$} (B);
\path[->,font=\scriptsize] (B) edge (D);
\path[->,font=\scriptsize] (C) edge (D);
\path[->,font=\scriptsize,>=angle 90]
([yshift= 2pt]A.east) edge node[above] {$x$} ([yshift= 2pt]C.west)
([yshift= -2pt]A.east) edge node[below] {$y$} ([yshift= -2pt]C.west);

\end{tikzpicture}
\end{center}
That is $\mathcal{P}_{S/R}(X_{S})=x_{\ast}\mathcal{P}_{A}^{\text{triv}}$ and $\mathcal{P}_{S/R}(X_{S}')=y_{\ast}\mathcal{P}_{A}^{\text{triv}}$.
\par 
Now consider the following diagram
\begin{center}
\begin{tikzpicture}[descr/.style={fill=white,inner sep=1.5pt}]
        \matrix (m) [
            matrix of math nodes,
            row sep=2.5em,
            column sep=2.5em,
            text height=1.5ex, text depth=0.25ex
        ]
        { 0 & J
         & B:=A\hat{\otimes}_{W(k)}A & A & 0 \\
           &  & S & R & \\
        };

        \path[overlay,->, font=\scriptsize]
        (m-1-1) edge (m-1-2)
        (m-1-2) edge (m-1-3)
        (m-1-3) edge node [above]{\text{mult.}} (m-1-4)
        (m-1-4) edge (m-1-5)
        (m-1-3) edge (m-2-3)
        (m-1-4) edge (m-2-4);
        
        \path[overlay,->>, font=\scriptsize]
        (m-2-3) edge (m-2-4);
\end{tikzpicture} 
\end{center}
Write $D_{B}(J)$ for the PD-envelope of $(B,J)$. Similarly, set $A_{0}:=W(k)[T_{1},\ldots,T_{h}]$, $B_{0}:=A_{0}\otimes_{W(k)}A_{0}$, $J_{0}:=\ker(B_{0}\xrightarrow{\text{mult.}}A_{0})$ and write $D_{B_{0}}(J_{0})$ for the PD-envelope of $(B_{0},J_{0})$. Then $D_{B_{0}}(J_{0})$ is the PD-polynomial algebra over $B_{0}$ in $h$ variables. Since $A=W(k)\llbracket t_{1},\ldots,t_{h}\rrbracket$ is flat over $A_{0}=W(k)[t_{1},\ldots, t_{h}]$, \cite{BO78} Prop. 3.21 gives that $D_{B}(J)$ is the PD-polynomial algebra over $B$ in $h$ variables. In particular, $D_{B}(J)$ is a flat $D_{B_{0}}(J_{0})$-module, so is certainly $p$-torsion free. We get a diagram
\begin{center}
\begin{tikzpicture}[descr/.style={fill=white,inner sep=1.5pt}]
        \matrix (m) [
            matrix of math nodes,
            row sep=2.5em,
            column sep=2.5em,
            text height=1.5ex, text depth=0.25ex
        ]
        { D_{B}(J) & S \\
          A & R & \\
        };

        \path[overlay,->, font=\scriptsize]
        (m-1-1) edge (m-1-2)
        (m-1-1) edge (m-2-1)
        (m-1-2) edge (m-2-2)
        (m-2-1) edge (m-2-2);

        \end{tikzpicture} 
\end{center}
Let $\widehat{D_{B}(J)}:=\varprojlim_{n} D_{B}(J)/p^{n}$ denote the $p$-adic completion of $D_{B}(J)$. Then $\mathcal{A}=\left(\widehat{D_{B}(J)}\rightarrow A\right)$ is a frame for $A$. The sections $A\rightrightarrows A\otimes_{W(k)}A$ induce frame morphisms 
\begin{equation*}
\mathcal{A}_{\text{triv}}\rightrightarrows\mathcal{A}\rightarrow\mathcal{W}_{S/R}
\end{equation*}
given by the following diagram 
\begin{center}
\begin{tikzpicture}
\node (A) at (-2,0) {$A$};
\node (B) at (-2,-1.5) {$A$};
\node (C) at (0,0) {$\widehat{D_{B}(J)}$};
\node (D) at (0,-1.5) {$A$};
\node (E) at (2,0) {$S$};
\node (F) at (2,-1.5) {$R$};

\path[->,font=\scriptsize] (A) edge (B);
\path[->,font=\scriptsize] (B) edge (D);
\path[->,font=\scriptsize] (C) edge (D);
\path[->,font=\scriptsize] (C) edge (E);
\path[->,font=\scriptsize] (D) edge (F);
\path[->,font=\scriptsize] (E) edge (F);
\path[->,font=\scriptsize,>=angle 90]
([yshift= 2pt]A.east) edge ([yshift= 2pt]C.west)
([yshift= -2pt]A.east) edge ([yshift= -2pt]C.west);

\end{tikzpicture}
\end{center}
Since the geometric construction of windows on $H_{\cris}^{n}(\mathfrak{X}/A)$ (\cite{LZ07}, Thm. 5.5) is compatible with base change (\cite{LZ07}, Cor. 5.6), the base change of $\mathcal{P}_{A}^{\text{triv}}$ along both frame morphisms $\mathcal{A}_{\text{triv}}\rightrightarrows\mathcal{A}$ gives the same $\mathcal{A}$-window; it is the $\mathcal{A}$-window $\mathcal{P}_{A}$  given by applying (\cite{LZ07}, Thm. 5.5) to the frame $\mathcal{A}$. We may now conclude the proof since the $\mathcal{W}_{S/R}$-displays $\mathcal{P}_{S/R}(X_{S})$ and $\mathcal{P}_{S/R}(X'_{S})$ are both given by the base change of $\mathcal{P}_{A}$ along $\mathcal{A}\rightarrow\mathcal{W}_{S/R}$.

\appendix
\section{Appendix}

In this appendix we recall the basic definitions of frames, windows and displays as given in \cite{LZ07} and \cite{LZ19}.

\begin{Def}\label{frame}
Let $R$ be a ring such that $p$ is topologically nilpotent in $R$. A frame $(A,\sigma,\alpha)$ for $R$ consists of a torsion-free $p$-adic ring $A$ with an endomorphism $\sigma:A\rightarrow A$ lifting the Frobenius on $A/p$ and a surjective homomorphism $\alpha:A\rightarrow R$ such that the kernel $\mathfrak{a}=\ker\alpha$ has divided powers. 
\end{Def}

\begin{Def}\label{window}
Let $\mathcal{A}=(A,\sigma,\alpha)$ be a frame for $R$. An $\mathcal{A}$-window consists of 
\begin{enumerate}
\item a finitely generated projective $A$-module $P_{0}$
\item a descending filtration of $P_{0}$ by $A$-submodules 
\begin{equation*}
P_{i+1}\subset P_{i}\subset\cdots\subset P_{1}\subset P_{0}
\end{equation*}
\item $\sigma$-linear homomorphisms 
\begin{equation*}
F_{i}:P_{i}\rightarrow P_{0}
\end{equation*}
\end{enumerate}
such that the following conditions are satisfied
\begin{enumerate}
\item $\mathfrak{a}P_{i}\subset P_{i+1}$ ; $P_{i+1}/\mathfrak{a}P_{i}$ is a finitely generated projective $R$-module $E_{i+1}$ for $i\geq 0$. Let $E_{0}=P_{0}/\mathfrak{a}P_{0}$.
\item The inclusions in 2) induce injective $R$-module homomorphisms
\begin{equation*}
E_{i+1}\rightarrow E_{i}\rightarrow \cdots\rightarrow E_{0}
\end{equation*}
\item $\mathfrak{a}P_{i}=P_{i+1}$ for $i$ large enough.
\item $F_{i}(x)=pF_{i+1}(x)$ for $x\in P_{i+1}$.
\item The union of the images $F_{i}(P_{i})$ for $i\in\mathbb{Z}_{\geq 0}$ generate $P_{0}$ as an $A$-module.
\end{enumerate}
\end{Def}

It is then shown in \cite{LZ07} page 181 that a window is isomorphic to a standard window, that is there are finitely generated projective $A$-modules $L_{0},\ldots, L_{d}$ with $\displaystyle\bigoplus_{i=0}^{d}L_{i}=P_{0}$ and $\sigma$-linear homomorphisms $\Phi_{i}:L_{i}\rightarrow\displaystyle\bigoplus_{j=0}^{d}L_{j}$ such that the determinant of $\Phi_{0}\oplus\cdots\oplus\Phi_{d}$ is a unit. Attached to this data we set for $i\geq 0$ 
\begin{equation*}
P_{i}=\mathfrak{a}^{i}L_{0}\oplus\mathfrak{a}^{i-1}L_{1}\oplus\cdots\oplus\mathfrak{a}L_{i-1}\oplus L_{i}\oplus\cdots\oplus L_{d}
\end{equation*}  
and define $F_{i}$ on $P_{i}$, that is $F_{i}|_{\mathfrak{a}^{i-k}L_{k}}$ for $k<i$ resp. $F_{i}|_{L_{k}}$ for $k\geq i$, as follows:
\begin{align*}
& F_{i}(ax)=\frac{\sigma(a)}{p^{i-k}}\Phi_{k}(x)\text{ for }0\leq k<i, \ x\in L_{k}, \ a\in\mathfrak{a}^{i-k} \\
& F_{i}(x)=p^{k-i}\Phi_{k}(x)\text{ for }i\leq k, \ x\in L_{k}.
\end{align*}
Then these data $(P_{i},F_{i})$ and the obvious inclusions $P_{i+1}\rightarrow P_{i}$ form a window called a standard window.

To define higher displays we will use frames over the ring of Witt vectors for a given $p$-adic ring $R$.

\begin{Def}\label{Witt frame}
Let $S$ be a $p$-adic ring and $W(S)$ its Witt vectors. The Witt frame $\mathcal{W}_{S}=(W(S),\mathcal{J}=I_{S},S,\sigma,\dot{\sigma})$ consists of the data $I_{S}=VW(S)$, $W(S)\rightarrow S$ the augmentation map with kernel $I_{S}$, $\sigma$ the Frobenius on $W(S)$ and $\dot{\sigma}:I_{S}\rightarrow W(S)$, $V\xi\mapsto\xi$.
\end{Def}

\begin{Def}\label{relative Witt frame}
Let $S\rightarrow R$ be a surjective homomorphism of $p$-adic rings such that $\mathfrak{a}$ becomes nilpotent in $S/pS$. Then the relative Witt frame $\mathcal{W}_{S/R}=(W(S),\mathcal{J},R,\sigma,\dot{\sigma})$ consists of the ideal $\mathcal{J}=\ker(W(S)\rightarrow R)$ which is a direct sum $\mathcal{J}=\tilde{\mathfrak{a}}\oplus I_{S}$ where $I_{S}=VW(S)$ is the augmentation ideal in $W(S)$ and $\tilde{\mathfrak{a}}\subset W(S)$ is the ideal consisting of logarithmic Teichm\"{u}ller representatives of elements of $\mathfrak{a}$ (\cite{Zin02}, 1.4). We recall the definition: 

The divided powers on $\mathfrak{a}$ yield divided Witt polynomials 
\begin{equation*}
w'_{n}(\underline{a})=\sum_{i=0}^{n}p^{i}a_{i}^{p^{n}-i}=\sum_{i=0}^{n}(p^{n-i}-1)!\gamma_{p^{n-i}}(a_{i})
\end{equation*}
for $\underline{a}=(a_{0},a_{1},\ldots)\in W(\mathfrak{a})$, and an isomorphism
\begin{align*}
\log : \ & W(\mathfrak{a})\xrightarrow{\simeq}\mathfrak{a}^{\mathbb{N}} \\
& \underline{a}\mapsto(w'_{0}(\underline{a}),\ldots,w'_{n}(\underline{a}),\ldots)
\end{align*}
Define $\tilde{\mathfrak{a}}=\log^{-1}(\mathfrak{a},0,0,\ldots)$. This is an ideal in $W(S)$. 

The map $\sigma$ is the Frobenius on $W(S)$. We have $\sigma(\tilde{\mathfrak{a}})=0$, $I_{S}\cdot\tilde{\mathfrak{a}}=0$ and define $\dot{\sigma}:\mathcal{J}\rightarrow W(S)$ by $\dot{\sigma}(a+V\xi)=\xi$ for $a\in\tilde{\mathfrak{a}}, \ \xi\in W(S)$.
\end{Def}

Then both $\mathcal{W}_{S}$ and $\mathcal{W}_{S/R}$ are equipped with maps called ``Verj\"{u}ngung''. In the case of $\mathcal{W}_{S}$ these are maps $\nu:I_{S}\otimes I_{S}\rightarrow I_{S}$, $V\xi_{1}\otimes V\xi_{2}\mapsto V(\xi_{1}\xi_{2})$ with iterations $\nu^{(k)}:I_{S}^{\otimes k}\rightarrow I_{S}$, $V\xi_{1}\otimes\cdots\otimes V\xi_{k}\mapsto V(\xi_{1}\cdots\xi_{k})$,  and $\pi:I_{S}\rightarrow I_{S}$, $V\xi\mapsto pV\xi$. In the case of $\mathcal{W}_{S/R}$, the Verj\"{u}ngung consists of the two maps $\nu:\mathcal{J}\otimes_{W(S)}\mathcal{J}\rightarrow\mathcal{J}$ and $\pi:\mathcal{J}\rightarrow\mathcal{J}$ with 
\begin{equation*}
\nu((a_{1}+V\xi_{1})\otimes(a_{2}+V\xi_{2}))=a_{1}\cdot a_{2}+V(\xi_{1}\cdot\xi_{2})
\end{equation*}
and 
\begin{equation*}
\pi(a+V\xi)=a+pV\xi
\end{equation*}
with obvious iterations $\nu^{(k)}$ that satisfy the properties (3) and (4) on page 460 of \cite{LZ19}.

In the following let $\mathcal{F}$ be one of the frames considered above, that is $\mathcal{F}=\mathcal{W}_{S}$ or $\mathcal{F}=\mathcal{W}_{S/R}$.

\begin{Def}\label{predisplay}
An $\mathcal{F}$-predisplay $(P_{i},\iota_{i},\alpha_{i},F_{i})$ consists of the following data:
\begin{enumerate}
\item A sequence of $W(S)$-modules $P_{i}$ for $i\geq 0$.
\item Two sets of $W(S)$-module homomorphisms 
\begin{equation*}
\iota_{i}:P_{i+1}\rightarrow P_{i} \ , \ \\ \ \ \alpha_{i}:\mathcal{J}\otimes_{W(S)}P_{i}\rightarrow P_{i+1} 
\end{equation*}
for $i\geq 0$.
\item A set of $\sigma$-linear maps for $i\geq 0$
\begin{equation*}
F_{i}:P_{i}\rightarrow P_{0}
\end{equation*}
\end{enumerate}
which satisfy the following properties:
\begin{enumerate}
\item Consider the following morphisms:
\begin{center}
\begin{tikzpicture}[descr/.style={fill=white,inner sep=1.5pt}]
        \matrix (m) [
            matrix of math nodes,
            row sep=2.5em,
            column sep=2.5em,
            text height=1.5ex, text depth=0.25ex
        ]
        { \mathcal{J}\otimes P_{i} & P_{i+1}  \\
          \mathcal{J}\otimes P_{i-1} & P_{i} & \\
        };

        \path[overlay,->, font=\scriptsize]
        (m-1-1) edge node [above]{$\alpha_{i}$} (m-1-2)
        (m-1-1) edge node [left]{$\mathrm{id}_{\mathcal{J}}\otimes\iota_{i-1}$} (m-2-1)
        (m-1-2) edge node [right]{$\iota_{i}$} (m-2-2)
        (m-2-1) edge node [below]{$\alpha_{i-1}$} (m-2-2);
        
\end{tikzpicture} 
\end{center}
the compositions $\iota_{i}\circ\alpha_{i}$ and $\alpha_{i-1}\circ(\mathrm{id}_{\mathcal{J}}\otimes\iota_{i-1})$ are the multiplication maps $\mathcal{J}\otimes P_{i}\rightarrow P_{i}$ for all $i$.
\item $F_{i+1}\otimes \alpha_{i}=\tilde{F}_{i}$
\end{enumerate}
\end{Def}
Here, for each $\sigma$-linear map $f:M\rightarrow N$ between $W(S)$-modules $M,N$, we define a new $\sigma$-linear map $\tilde{f}:\mathcal{J}\otimes M\rightarrow N$ by $\tilde{f}(\eta\otimes m)=\dot{\sigma}(\eta)f(m)$ for $\eta\in\mathcal{J}$.

In the following we define standard data of a display over a Witt frame. In the case of $\mathcal{W}_{S}$ the definition is given in \cite{LZ07} Definition 2.5. We only recall the definition of standard data of a display over the relative Witt frame $\mathcal{W}_{S/R}$ given in \cite{LZ19} page 460-461.

\begin{Def}\label{standard}
A display given by standard data over the relative Witt frame $\mathcal{W}_{S/R}$ is given as follows:

It consists of finitely generated projective $W(S)$-modules $L_{0},\ldots, L_{d}$ and $\sigma$-linear homomorphisms 
\begin{equation*}
\Phi_{i}:L_{i}\rightarrow L_{0}\oplus\cdots\oplus L_{d}
\end{equation*}
such that $\Phi_{0}\oplus\cdots\oplus\Phi_{d}:L_{0}\oplus\cdots\oplus L_{d}\rightarrow L_{0}\oplus\cdots\oplus L_{d}$ is a $\sigma$-linear isomorphism. Define $\mathcal{J}_{i}=\tilde{\mathfrak{a}}^{i}\oplus VW(S)$. Set
\begin{equation*}
P_{i}=\mathcal{J}_{i}L_{0}\oplus\mathcal{J}_{i-1}L_{1}\oplus\cdots\oplus\mathcal{J}L_{i-1}\oplus L_{i}\oplus\cdots\oplus L_{d}
\end{equation*}
The map $\iota_{i}$ is defined by the following diagram
\begin{center}
\begin{tikzpicture}[descr/.style={fill=white,inner sep=1.5pt}]
        \matrix (m) [
            matrix of math nodes,
            row sep=2.5em,
            column sep=-0.3em,
            text height=1.5ex, text depth=0.25ex
        ]
        { \mathcal{J}_{i+1}L_{0} & \oplus & \mathcal{J}_{i}L_{1} & \oplus & \cdots & \oplus & \mathcal{J}L_{i} & \oplus & L_{i+1} & \oplus & \cdots & \oplus & L_{d} \\
            \mathcal{J}_{i}L_{0} & \oplus & \mathcal{J}_{i-1}L_{1} & \oplus & \cdots & \oplus & L_{i} & \oplus & L_{i+1} & \oplus & \cdots & \oplus & L_{d}\\
        };

        \path[overlay,->, font=\scriptsize]
        (m-1-1) edge node [left]{$\pi$} (m-2-1)
        (m-1-3) edge node [left]{$\pi$} (m-2-3)
        (m-1-7) edge node [left]{$\mathrm{id}$} (m-2-7)
        (m-1-9) edge node [left]{$\mathrm{id}$} (m-2-9)
        (m-1-13) edge node [left]{$\mathrm{id}$} (m-2-13);
        
\end{tikzpicture}
\end{center}
The homomorphisms $\alpha_{i}:\mathcal{J}\otimes P_{i}\rightarrow P_{i+1}$ are defined as follows
\begin{center}
\begin{tikzpicture}[descr/.style={fill=white,inner sep=1.5pt}]
        \matrix (m) [
            matrix of math nodes,
            row sep=2.5em,
            column sep=-0.3em,
            text height=1.5ex, text depth=0.25ex
        ]
        { \mathcal{J}\otimes\mathcal{J}_{i}L_{0} & \oplus & \mathcal{J}\otimes\mathcal{J}_{i-1}L_{1} & \oplus & \cdots & \oplus & \mathcal{J}\otimes L_{i} & \oplus & \mathcal{J}\otimes L_{i+1} & \oplus & \cdots & \oplus & \mathcal{J}\otimes L_{d} \\
            \mathcal{J}_{i+1}L_{0} & \oplus & \mathcal{J}_{i}L_{1} & \oplus & \cdots & \oplus & L_{i} & \oplus & L_{i+1} & \oplus & \cdots & \oplus & L_{d}\\
        };

        \path[overlay,->, font=\scriptsize]
        (m-1-1) edge node [left]{$\nu$} (m-2-1)
        (m-1-3) edge node [left]{$\nu$} (m-2-3)
        (m-1-7) edge node [left]{$\mathrm{mult}$} (m-2-7)
        (m-1-9) edge node [left]{$\mathrm{mult}$} (m-2-9)
        (m-1-13) edge node [left]{$\mathrm{mult}$} (m-2-13);
        
\end{tikzpicture}
\end{center}
Finally we define $\sigma$-linear maps $F_{i}:P_{i}\rightarrow P_{0}$ as 
\begin{center}
\begin{tikzpicture}[descr/.style={fill=white,inner sep=1.5pt}]
        \matrix (m) [
            matrix of math nodes,
            row sep=2.5em,
            column sep=-0.3em,
            text height=1.5ex, text depth=0.25ex
        ]
        { \mathcal{J}_{i}L_{0} & \oplus & \cdots & \oplus & \mathcal{J}L_{i-1} & \oplus & L_{i} & \oplus & L_{i+1} & \oplus & L_{i+2} & \oplus & \cdots \\
           L_{0} & \oplus & \cdots & \oplus & L_{i-1} & \oplus & L_{i} & \oplus & L_{i+1} & \oplus & L_{i+2} & \oplus & \cdots\\
        };

        \path[overlay,->, font=\scriptsize]
        (m-1-1) edge node [left]{$\tilde{\Phi}_{0}$} (m-2-1)
        (m-1-5) edge node [left]{$\tilde{\Phi}_{i-1}$} (m-2-5)
        (m-1-7) edge node [left]{$\Phi_{i}$} (m-2-7)
        (m-1-9) edge node [right]{$p\Phi_{i+1}$} (m-2-9)
        (m-1-11) edge node [right]{$p^{2}\Phi_{i+2}$} (m-2-11);
        
\end{tikzpicture}
\end{center}
where $\tilde{\Phi}_{j}$ is defined by $\tilde{\Phi}_{j}(\eta l_{j})=\dot{\sigma}(\eta)\Phi_{j}(l_{j})$ for $\eta\in\mathcal{J}_{j}$, $l_{j}\in L_{j}$, $j<i$.
\end{Def}
These data meet the requirements of a predisplay.

\begin{Def}\label{display}
Let $\mathcal{F}$ be either of the Witt frames considered above. Then an $\mathcal{F}$-display is an $\mathcal{F}$-predisplay which is isomorphic to the display associated to standard data. The choice of such an isomorphism is called a normal decomposition. 
\end{Def}

For $S\rightarrow R$ a surjective PD-morphism one has the following morphisms of frames equipped with Verj\"{u}ngung
\begin{equation*}
\mathcal{W}_{S}\xrightarrow{\epsilon}\mathcal{W}_{S/R}\rightarrow\mathcal{W}_{R}
\end{equation*}

\begin{Remark}\label{base change}
 For a morphism of frames with Verj\"{u}ngung $u:\mathcal{F}\rightarrow\mathcal{F}'$, we have a base change map of displays, that is the $\mathcal{F}'$-display $u_{\ast}\mathcal{P}$ obtained by base change of an $\mathcal{F}$-display $\mathcal{P}$ exists (\cite{LZ19}, Prop. 6).
\end{Remark}

For any $\mathcal{W}_{S}$-display $\mathcal{P}_{S}$ we can associate the base change $\mathcal{P}_{S/R}=\epsilon_{\ast}\mathcal{P}_{S}$, we also call the relative display for the morphism $S\rightarrow R$ associated to $\mathcal{P}_{S}$. It is clear from the definitions that if $\mathcal{P}_{S}$ has a normal decomposition with finitely generated projective $W(S)$-modules $L_{i}$, $i=0,\ldots, d$, then using the same finitely generated projective modules $L_{i}$ for the relative display $\mathcal{P}_{S/R}$, the inclusion map $I_{S}=VW(S)\rightarrow\tilde{\mathfrak{a}}^{i}+I_{S}=\mathcal{J}_{i}$ (for $\mathfrak{a}=\ker(S\rightarrow R)$) and the obvious extensions of $\iota_{i}$, $\alpha_{i}$, $F_{i}$ to the $P_{i}$ in Definition \ref{standard} built from the $L_{i}$ defines the standard data  for $u_{\ast}\mathcal{P}_{S}=:\mathcal{P}_{S/R}$.

\begin{Remark}
If $(P_{i})$ is an $\mathcal{A}$-window for a frame $\mathcal{A}=(A\rightarrow R)$ as in definitions \ref{frame}--\ref{window}, then there is an induced $\mathcal{W}_{R}$-display given by the composite ring homomorphism $\kappa:A\rightarrow W(A)\rightarrow W(R)$ that satisfies
\begin{align*}
& \kappa(\sigma(a))=F\kappa(a) \ \ \ ; \ a\in A \\
& \kappa\left(\frac{\sigma(a)}{p}\right)=V^{-1}\kappa(a) \ \ \ ; \ a\in\mathfrak{a}
\end{align*}
It is described explicitly for standard data associated to the window and also gives an invariant construction in \cite{LZ07} page 182.
\end{Remark}

\begin{Remark}
For $S\rightarrow R$ as above and the induced morphism of frames $\mathcal{W}_{S}\rightarrow\mathcal{W}_{R}$, the base change from $\mathcal{W}_{S}$-displays to $\mathcal{W}_{R}$-displays coincides with the one given in \cite{LZ07} Prop. 2.12.
\end{Remark}

\
\\
\
\\
\noindent 
Oli Gregory \hfill Andreas Langer \\
Technische Universit\"{a}t M\"{u}nchen \hfill University of Exeter \\
Zentrum Mathematik - M11 \hfill Mathematics \\
Boltzmannstra{\ss}e 3 \hfill Exeter EX4 4QF \\
85748 Garching bei M\"{u}nchen, Germany \hfill Devon, UK \\
email: oli.gregory@tum.de \hfill email: a.langer@exeter.ac.uk

\end{document}